\theoremstyle{plain}
\newtheorem{theorem}{Theorem}[section]
\newtheorem{lemma}[theorem]{Lemma}
\newtheorem{proposition}[theorem]{Proposition}
\theoremstyle{remark}
\newtheorem{remark}{Remark}
\newcommand{\eremk}{\hbox{}\hfill\rule{0.8ex}{0.8ex}}
\DeclareTextCompositeCommand{\u}{T1}{i}{\u\imath}
\newcommand{\Norm}[1]{{\left\|{#1} \right\|}}
\newcommand{\SemiNorm}[1]{{\left|{#1} \right|}}
\newcommand{\jump}[1]{\left[\!\left[#1\right]\!\right]}
\newcommand{\avg}[2][F]{\{#2\}_{#1}}
\newcommand{\Normth}[1]{{\left\vert\kern-0.25ex\left\vert\kern-0.25ex\left\vert #1 
\right\vert\kern-0.25ex\right\vert\kern-0.25ex\right\vert}}
\newcommand{\Normf}[1]{{\left\vert\kern-0.25ex\left\vert\kern-0.25ex\left\vert\kern-0.25ex\left\vert #1 
\right\vert\kern-0.25ex\right\vert\kern-0.25ex\right\vert\kern-0.25ex\right\vert}}
\newcommand{\h}{h}
\newcommand{\nablah}{\nabla_\h}
\newcommand{\T}{T}
\newcommand{\E}{K}
\newcommand{\Tauh}{\mathcal T_\h}
\newcommand{\p}{p}
\newcommand{\pstar}{\p^*}
\newcommand{\psharp}{\p^\sharp}
\newcommand{\pprime}{\p'}
\newcommand{\psharpprime}{(\psharp)'}
\newcommand{\pddagprime}{(\pddag)'}
\newcommand{\pddag}{\p^{\ddag}}
\newcommand{\pnatural}{\p^{\natural}}
\DeclareMathOperator{\W}{W}
\newcommand{\Brho}{B_\rho}
\newcommand{\hOmega}{\h_\Omega}
\newcommand{\GammaN}{\Gamma_N}
\newcommand{\GammaD}{\Gamma_D}
\newcommand{\CSob}[3]{C_{\text{Sob}}(#1,#2,#3)}
\newcommand{\hE}{\h_\E}
\newcommand{\Fcalh}{\mathcal F_h}
\newcommand{\FcalE}{\mathcal F^\E}
\newcommand{\FcalEI}{\mathcal F^{\E I}}
\newcommand{\FcalED}{\mathcal F^{\E D}}
\newcommand{\FcalhI}{\mathcal F_h^I}
\newcommand{\FcalhB}{\mathcal F_h^B}
\newcommand{\FcalhD}{\mathcal F_h^D}
\newcommand{\FcalhID}{\mathcal F_h^{ID}}
\newcommand{\F}{F}
\newcommand{\hF}{\h_\F}
\newcommand{\frakTE}{\mathfrak T_\E}
\newcommand{\FET}{\F_\E^\T}
\newcommand{\nbf}{\mathbf n}
\newcommand{\nbfF}{\mathbf n_\F}
\newcommand{\q}{q}
\newcommand{\qprime}{\q'}
\newcommand{\qflat}{\q^\flat}
\newcommand{\pflat}{\p^\flat}
\newcommand{\s}{s}
\newcommand{\sprime}{\s'}
\let\L\relax
\DeclareMathOperator{\L}{L}
\newcommand{\Lbf}{\mathbf L}
\newcommand{\Wbf}{\mathbf W}
\let\div\relax
\newcommand{\div}{\nabla\cdot}
\newcommand{\Rbb}{\mathbb R}
\newcommand{\Nbb}{\mathbb N}
\DeclareMathOperator{\BV}{BV}
\newcommand{\CBV}{C_{\rm BV}}
\newcommand{\htildeF}{\widetilde\h_\F}
\newcommand{\CSP}{C_{\rm SP}}
\newcommand{\CSPsharp}{C_{\rm SP}^\sharp}
\newcommand{\postar}{\p\ostar}
\newcommand{\ostar}{1^*}
\newcommand{\CS}{C_{\rm S}}
\newcommand{\CSsharp}{\CS^\sharp}
\newcommand{\CSflat}{\CS^\flat}
\newcommand{\CSddag}{\CS^\ddag}
\newcommand{\CSnatural}{\CS^\natural}
\newcommand{\pnaturalprime}{(\pnatural)'}
\renewcommand{\r}{r}
\DeclareMathOperator{\card}{card}
\newcommand{\gN}{g_N}
\newcommand{\uh}{u_\h}
\title{\footnotesize Trace inequalities for piecewise $\W^{1,\p}$ functions over general polytopic meshes}
\author{\footnotesize{Michele Botti\thanks{MOX, Department of Mathematics, Politecnico di Milano, 20133 Milano, Italy (michele.botti@polimi.it)},   
Lorenzo Mascotto\thanks{Department of Mathematics and Applications, University of Milano-Bicocca, 20125 Milan, Italy (lorenzo.mascotto@unimib.it); 
Faculty of Mathematics, University of Vienna, 1090 Vienna, Austria;
IMATI-CNR, 27100 Pavia, Italy}}}
\date{}
\begin{document}
\maketitle

\begin{abstract}
\noindent
Trace inequalities are crucial tools to derive the stability
of partial differential equations with inhomogeneous,
natural boundary conditions.
In the analysis of corresponding Galerkin methods,
they are also essential to show convergence
of sequences of discrete solutions
to the exact one for data with minimal regularity
under mesh refinements and/or degree of accuracy increase.
In nonconforming discretizations,
such as Crouzeix-Raviart and discontinuous Galerkin,
the trial and test spaces consists of functions
that are only piecewise continuous:
standard trace inequalities
cannot be used in this case.
In this work, we prove several trace inequalities
for piecewise $\W^{1,\p}$ functions.
Compared to analogous results already available in the literature,
our inequalities are established:
(\emph{i}) on fairly general polytopic meshes (with arbitrary number of
facets and arbitrarily small facets);
(\emph{ii})  without the need of finite dimensional arguments
(e.g., inverse estimates, approximation properties
of averaging operators);
(\emph{iii})  for different ranges
of maximal and nonmaximal Lebesgue indices.

\medskip\noindent
\textbf{AMS subject classification}:
46E35, 65N30.

\medskip\noindent
\textbf{Keywords}:
Trace inequalities;
piecewise $\W^{1,\p}$ functions;
nonconforming finite elements;
polytopic meshes;
spaces of bounded variation.
\end{abstract}

\begin{flushright}
{\scriptsize\textsl{\indent E bada, Pinocchio,\\
non fidarti mai troppo di chi sembra buono\\
e ricordati che c’è sempre qualcosa di buono\\
in chi ti sembra cattivo.}\\
Carlo Collodi}
\end{flushright}

\section{Introduction}

\paragraph*{Overture.}
Trace inequalities are prominent
in the analysis of numerical methods
for partial differential equations;
they are particularly relevant, e.g., in the proof
of well-posedness and convergence
in presence of inhomogeneous, natural boundary conditions.
While standard trace inequalities are typically
enough for conforming methods,
one needs more refined counterparts
for nonconforming schemes,
which employ piecewise regular functions.
Such inequalities take the rough form
($\p$ and $\q$ are Lebesgue indices discussed
in Section~\ref{subsection:Lebesgue-Sobolev-indices} below;
$\nablah$ denotes the broken gradient
with respect to a decomposition of the domain~$\Omega$
as in Section~\ref{subsection:meshes-broken} below;
$\SemiNorm{\cdot}_{J,\p,\q}$ is related to jump terms
in the sense of~\eqref{seminorm-broken} below)
\[
\Norm{u}_{\L^\q(\partial\Omega)}
\lesssim 
\Norm{\nablah u}_{\Lbf^\p(\Omega)}
+ \SemiNorm{u}_{J,\p,\q}.
\]

\paragraph{An example.}
With the notation as in~\eqref{eq:Omega} below,
we consider the~$\p$-Laplacian problem,~$\p$ in $(1,\infty)$,
\begin{equation} \label{pLaplacian-strong}
\begin{cases}
\text{find } u:\Omega\to \mathbb R \text{ such that}\\
-\div(\vert\nabla u\vert^{\p-2} \nabla u) = 0 
        & \text{ in } \Omega,\\
u=0     & \text{ on } \GammaD , \\
\vert\nabla u\vert^{\p-2} \nabla u \cdot \textbf{n}=\gN
        & \text{ on } \GammaN.
\end{cases}
\end{equation}
Given Lebesgue indices as in~\eqref{indices} below,
for $\gN$ in~$\L^{\psharpprime}(\GammaN)$,
a weak formulation of~\eqref{pLaplacian-strong} reads
\begin{equation} \label{pLaplacian-weak}
\begin{cases}
\text{find } u \in V:= W^{1,\p}_{\GammaD}(\Omega) \text{ such that}\\
(\vert\nabla u\vert^{\p-2} \nabla u, \nabla v)_{0,\Omega}
= (\gN, v)_{0,\GammaN}
\qquad\forall v \in V .
\end{cases}
\end{equation}
Taking $v=u$ in~\eqref{pLaplacian-weak},
H\"older's inequality entails
\[
\SemiNorm{u}_{\W^{1,\p}(\Omega)}^\p
\le \Norm{\gN}_{\L^{\psharpprime}(\GammaN)}
    \Norm{u}_{\L^{\psharp}(\GammaN)}.
\]
The Sobolev embedding
$\W^{\frac{1}{\pprime},\p}(\GammaN)
\hookrightarrow\L^{\psharp}(\GammaN)$,
the trace inequality $\W^{1,\p}(\Omega) \hookrightarrow\W^{\frac{1}{\pprime},\p}(\GammaN)$,
and a Poincar\'e inequality in $\W^{1,\p}_{\GammaD}(\Omega)$
(the overall constant being~$C$) give
\[
\SemiNorm{u}_{\W^{1,\p}(\Omega)}^\p
\le C \Norm{\gN}_{\L^{\psharpprime}(\GammaN)}
    \SemiNorm{u}_{\W^{1,\p}(\Omega)}
\qquad \Longrightarrow \qquad
\SemiNorm{u}_{\W^{1,\p}(\Omega)}
\le C^{\frac{1}{\p-1}} \Norm{\gN}_{\L^{\psharpprime}(\GammaN)}^{\frac{1}{\p-1}}.
\]
If we consider a $\W^{1,\p}$-conforming finite element
discretization of~\eqref{pLaplacian-weak},
see, e.g., \cite{Barrett-Liu:1993},
then we deduce analogously a bound of the form
(here $\uh$ is the conforming finite element solution
over a given simplicial/Cartesian mesh
and for an arbitrary polynomial degree)
\[
\SemiNorm{\uh}_{\W^{1,\p}(\Omega)}
\le C^{\frac{1}{\p-1}} \Norm{\gN}_{\L^{\psharpprime}(\GammaN)}^{\frac{1}{\p-1}}.
\]
This is the starting point for proving convergence of the finite element
method over sequences of meshes under minimal data regularity;
cf., e.g., \cite{Di-Pietro.Ern:2010}.
Convergence for nonconforming
(e.g., discontinuous Galerkin, Crouzeix-Raviart)
methods require more sophisticated trace inequalities
since these schemes are based on finite dimensional spaces
that are only piecewise $\W^{1,\p}$;
the constants appearing therein should depend
neither on the size of the elements in the mesh
nor on the type of discretization space that is used.

\paragraph*{State-of-the-art.}
For simplicial meshes,
Girault and Wheeler~\cite[Proposition~5]{Girault-Wheeler:2008}
proved a trace inequality for lowest order
Crouzeix--Raviart elements based on a
Scott--Zhang-type regularization argument;
they showed that traces of discrete functions
have the same Lebesgue regularity
as that appearing in the optimal conforming case.
Buffa and Ortner~\cite[Theorem~4.4]{Buffa-Ortner:2009}
generalized those results to the case of
arbitrary order piecewise polynomial spaces,
again with maximal Lebesgue regularity;
the main technical tool here is the stability
of a lifting operator, whose stability properties
require polynomial inverse inequalities.
Analogous results are given in \cite[Lemma~3.6]{Zhao-Chung-Park-Zhou:2021},
where the main technical tool in the proof
is the lowest order Crouzeix-Raviart interpolant.
Trace inequalities for broken Sobolev spaces
were extended to regular polytopic meshes
in \cite[Lemma B.24, eq. (B.58)]{Droniou-Eymard-Gallouet-Guichard-Herbin:2018} for lowest order
and \cite[Theorem 6.7]{DiPietro-Droniou:2020}
for general order broken polynomial spaces;
the Lebesgue indices of the boundary and jump terms
are the same as that of the broken gradient
and again inverse estimates and averaging arguments
are the lynchpin of the analysis therein.
In the recent work~\cite{Badia-Droniou-Tushar:2025},
novel discrete trace inequalities
for hybrid methods were proved,
which involve a mesh-dependent
$H^{\frac12}$-seminorm on the boundary.
All the references above are based on
finite dimensional arguments
and consider either simplicial meshes
or regular polytopic meshes.

\paragraph*{Main results and advances.}
The three main results of this work
are Theorems~\ref{theorem:sob-trace-pnatural},
\ref{theorem:sob-trace-psharp},
and~\ref{theorem:sob-trace-3} below;
each of them establishes trace inequalities
for piecewise $\W^{1,\p}$ functions
involving different types of Lebesgue indices;
their applicability in the analysis of nonconforming
methods is discussed in Remark~\ref{remark:where-to-use} below.
Their proofs are not based on finite dimensional arguments
(e.g., polynomial inverse estimates,
smoothing/averaging operators)
but only on direct estimates;
moreover, they hold true on fairly general meshes
(with, e.g., arbitrarily small facets,
arbitrary number of facets per element).
The constants appearing in these inequalities
are provided, which are:
(\emph{i}) explicit in terms of constants
of other elementary inequalities
(e.g., Sobolev-Poincar\'e inequalities for broken Sobolev spaces);
(\emph{ii}) certain ``regularity'' parameters of a given mesh
(cf. Section~\ref{subsection:meshes-broken});
(\emph{iii}) the involved Lebesgue indices.
The first dependence is essential in the proof of convergence
for the $p$- and $hp$-versions of nonconforming methods
for linear and nonlinear problems,
since inverse estimates typically result in bounds
involving the polynomial degree of the scheme;
it is also essential for nonpolynomial methods.
The latter dependence is crucial while performing
refinement/coarsening within an adaptive mesh refinement;
meshes with elements obtained by merging smaller simplicial elements
are fully covered by our theory.
Another advancement of this manuscript is that
we exhibit different types of trace inequalities
allowing for maximal and nonmaximal Lebesgue regularity.

\paragraph*{Structure of the paper.}
Standard Sobolev spaces and their broken versions
on certain classes of polytopic meshes,
and spaces of functions with bounded variation (BV)
are detailed in Section~\ref{section:preliminaries};
there, we also review related inequalities,
including standard trace inequalities,
Sobolev-Poincar\'e inequalities
for broken $\W^{1,\p}$ spaces,
and bounds on the BV norm in terms of broken Sobolev norms.
Section~\ref{section:trace} is concerned with the proof
of the novel trace inequalities for broken Sobolev spaces,
based on some novel technical results.

\section{Polytopic meshes, broken spaces, and related inequalities}
\label{section:preliminaries}
We consider an open, bounded, Lipschitz domain
\begin{equation}\label{eq:Omega}
\text{$\Omega \subset \Rbb^d$,
$d$ in~$\Nbb\setminus\{1\}$,
with boundary $\Gamma:=\partial\Omega$, which we split into
$\Gamma=\Gamma_{\rm D}\cup\Gamma_{\rm N}$,
$\vert\Gamma_{\rm D}\vert \ne \emptyset$.}
\end{equation}
We assume that~$\Omega$
\begin{itemize}
\item is either star-shaped with respect
to a ball $\Brho$ of radius~$\rho$;
the diameter of~$\Omega$ is~$\hOmega$;
\item or admits a shape-regular decomposition
into simplices;
\item or is the union of a finite number of star-shaped domains.
\end{itemize}
The regularity of~$\Omega$ plays a role only
in the proof of Lemma~\ref{lemma:broken-Sobolev-LP} below;
cf.~\cite{Botti-Mascotto:2025-B}.

This section is structured as follows:
in Section~\ref{subsection:Lebesgue-Sobolev-indices}
we set the notation related to Lebesgue and Sobolev spaces,
and introduce some Lebesgue indices;
in Section~\ref{subsection:meshes-broken}
we introduce a class of fairly general polytopic meshes,
broken Sobolev spaces, and jump operators;
corresponding broken Sobolev seminorms and norms, along
with their basic properties are detailed
in Section~\ref{subsection:semi-norms-technical};
the space of functions of bounded variation,
its norm, and the relation with broken
Sobolev norms are discussed in Section~\ref{subsection:BV}.

\subsection{Lebesgue and Sobolev spaces, and special Lebesgue indices}
\label{subsection:Lebesgue-Sobolev-indices}
For a generic subset~$X$ of~$\Omega$ with diameter~$\h_X$,
and for all~$\p$ in~$[1,\infty)$,
we consider the Lebesgue and Sobolev spaces~$\L^\p(X)$ and~$\W^{1,\p}(X)$,
which we endow with norm, and seminorm and norm, respectively,
\[
\Norm{v}_{\L^\p(X)}^\p
:= \int_X \vert v \vert^\p ,
\]
and
\[
\SemiNorm{v}_{\W^{1,\p}(X)}
:=   \Norm{\nabla v}_{\Lbf^\p(X)},
\qquad\qquad
\Norm{v}_{\W^{1,\p}(X)}^\p
:=  \h_X^{-p} \Norm{v}_{\L^\p(X)}^\p
    +  \SemiNorm{v}_{\W^{1,\p}(X)}^\p .
\]
Vector version Sobolev spaces,
norms, and seminorms are denoted by
$\Wbf^{1,\p}(X)$,
$\Norm{\cdot}_{\Wbf^{1,\p}(X)}$,
and $\SemiNorm{\cdot}_{\Wbf^{1,\p}(X)}$,
respectively.

We define the subspace of~$\W^{1,\p}(X)$ of functions
with zero trace over~$\GammaD$:
\[
\W^{1,\p}_{\GammaD}(X)
:= \{ v \in \W^{1,\p}(X) \mid v_{|\GammaD}=0 \} .
\]
We further recall the following Sobolev embeddings \cite[Sect.~2.3]{Ern-Guermond:2021}:
\begin{itemize}
\item if $\p <d$,
$\W^{1,\p}(X) \hookrightarrow \L^{q}(X)$
for all $q$ in $[\p,\frac{\p d}{d-\p}]$;
\item if $\p = d$,
$\W^{1,\p}(X) \hookrightarrow \L^{q}(X)$
for all $q$ in $[\p,\infty)$.
\end{itemize}
We spell out the generic Sobolev embedding bound:
for~$p$ and~$q$ as above,
there exists a positive constant~$\CSob{\q}{\p}{X}$
such that
\begin{equation} \label{Sobolev-embedding}
\Norm{v}_{\L^{q}(X)}
\le \CSob{q}{\p}{X} 
    \h_X^{\frac{d}{q}- \frac{d}{\p} + 1} \Norm{v}_{\W^{1,\p}(X)}
\qquad\qquad\qquad
\forall v \in \W^{1,\p}(X).
\end{equation}
Given an index~$\p$ in $[1,\infty)$, we define
\begin{equation} \label{indices}
\begin{split}
& \pprime :=  \frac{\p}{\p-1};
\qquad\qquad
\pstar := 
\begin{cases}
\frac{\p d}{d-\p}   & \text{if } \p < d  \\
\infty              & \text{otherwise};
\end{cases}
\qquad\qquad
\psharp := 
\begin{cases}
\frac{\p(d-1)}{d-\p}    & \text{if } \p < d \\
\infty                  & \text{otherwise};
\end{cases}\\
&  \pddag := 
 \frac{\p d -1}{d-1} ;
\qquad\quad\;\,
\pnatural :=
\p + \frac{\p-1}{(d-1)\pprime} ;
\qquad\qquad\qquad
\pflat:= \frac{\p d}{\p+d-1}.
\end{split}
\end{equation}
Observe
\begin{equation} \label{relation-indices}
1\le\pflat\le\p\le\pnatural\le\pddag\le\psharp\le\pstar,
\qquad
(\pflat)^\sharp = \p = (\psharp)^\flat,
\qquad
\begin{cases}
 \psharp\le\postar & \text{if } \p\le\frac{2d-1}{d}  \\
 \psharp\ge\postar & \text{if } \p\ge\frac{2d-1}{d}.
\end{cases}
\end{equation}
In Figure~\ref{fig:indices}, we report the behaviour
of the indices in~\eqref{indices} for~$\p$ varying
in $[1,6]$ with step $1/10$ in dimensions $d=2$ (left) and $d=3$ (right).

\begin{figure}[htbp]
\centering
\begin{subfigure}[b]{0.49\textwidth}
\centering
\includegraphics[width=\linewidth]{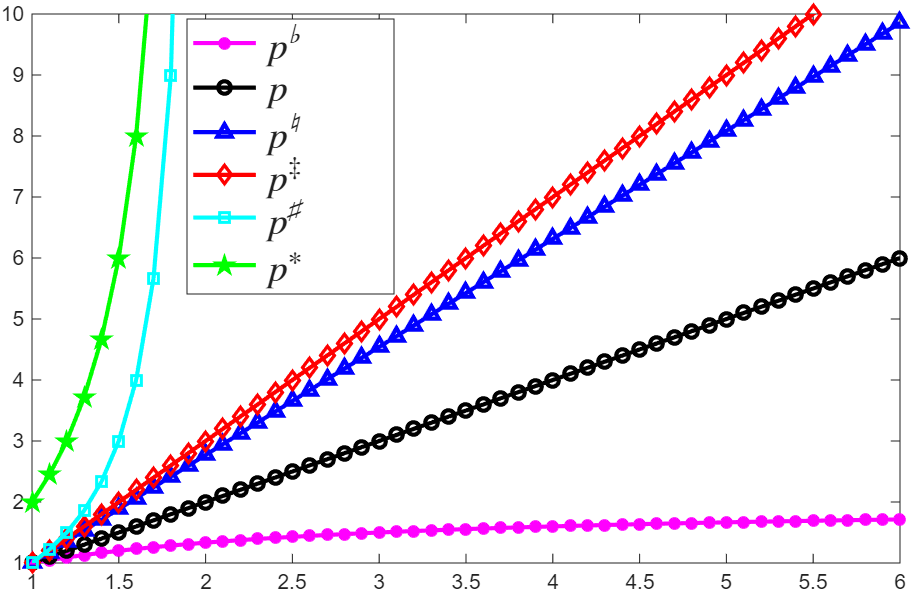}
\caption{$d=2$.}
\end{subfigure}
\hfill
\begin{subfigure}[b]{0.49\textwidth}
\centering
\includegraphics[width=\linewidth]{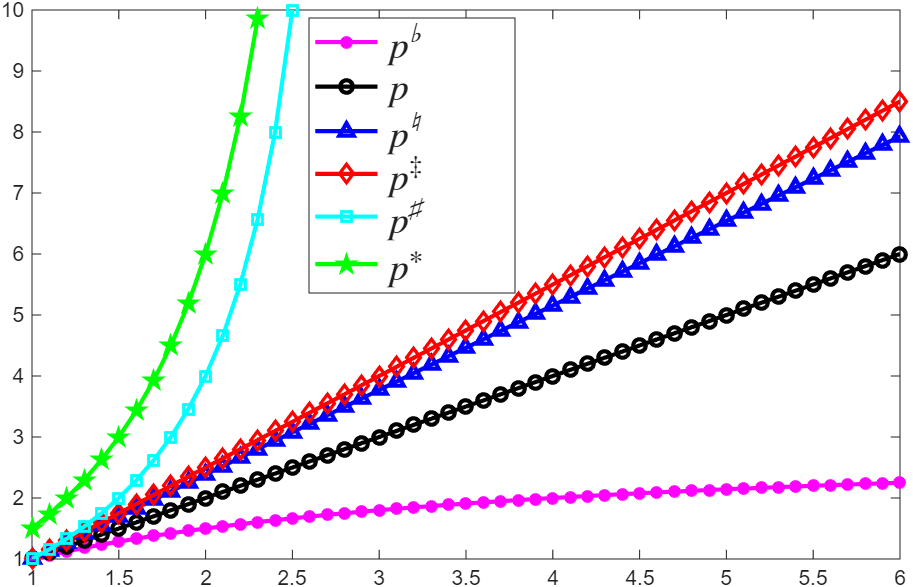}
\caption{$d=3$.}
\end{subfigure}
\caption{Behaviour of the indices in~\eqref{indices} for~$\p$ varying
in $[1,6]$ with step $1/10$.}
\label{fig:indices}
\end{figure}

\begin{remark}[Meaning of the Lebesgue indices]
For a given~$\p$ larger than~$1$:
$\pprime$ is its conjugate index;
$\pstar$ is the maximal Lebesgue regularity
of functions in $\W^{1,\p}(\Omega)$;
$\psharp$ is the maximal Lebesgue regularity
of the trace on $\partial\Omega$
of functions in $\W^{1,\p}(\Omega)$;
$\pddag$ is a technical index appearing
in Theorem~\ref{theorem:sob-trace-psharp} below;
$\pnatural$ is a technical index appearing
in Theorem~\ref{theorem:sob-trace-pnatural} below;
$\pflat$ is the ``dual'' of~$\psharp$.
\eremk
\end{remark}

\subsection{Meshes and broken Sobolev spaces} \label{subsection:meshes-broken}
We consider families of meshes~$\{\Tauh\}$,
where each~$\Tauh$ is a finite collection of disjoint, closed,
polytopic elements such that
$\overline{\Omega}=\bigcup_{\E\in\Tauh}\E$.
For each $\E$ in $\Tauh$, $\partial \E$ and $h_\E$ denote
the boundary and the diameter of~$\E$, respectively.
We associate each~$\Tauh$ with a set $\Fcalh$ covering the mesh skeleton,
i.e., $\bigcup_{\E\in\Tauh}\partial \E = \bigcup_{F\in\Fcalh} F$.
A facet $\F$ in $\Fcalh$ is a hyperplanar,
closed, and connected subset of $\overline{\Omega}$
with positive measure such that
\begin{itemize}
\item either there exist distinct $\E_{1,F}$ and $\E_{2,F}$ in $\Tauh$
such that $F\subseteq\partial \E_{1,F}\cap\partial \E_{2,F}$
and $\F$ is called an \emph{internal facet},
\item or there exists $\E_F$ in $\Tauh$ such that
$F\subseteq\partial \E_F\cap\partial\Omega$
and $\F$ is called a \emph{boundary facet}.
\end{itemize}
The notation $h_F$ is used for the diameter of the facet $F$ in $\Fcalh$.
Interfaces and boundary facets are collected in the subsets~$\FcalhI$ and~$\FcalhB$, respectively.
The set of facets of an element~$\E$
is~$\FcalE$; we also define
$\FcalED:= \FcalE\cap\FcalhD$ and $\FcalEI:=\FcalE\cap \FcalhI$.

Following \cite[Assumption 2.1]{Cangiani-Dong-Georgoulis:2016},
we demand that, for all~$\E$ in any~$\Tauh$,
there exists a partition $\mathfrak{T}_\E$
of~$\E$ into non-overlapping $d$-dimensional simplices.
Moreover, we assume that there exists a universal,
positive constant~$\gamma$ such that,
for all~$\E$ in any $\Tauh$ and
all $\T$ in $\frakTE$ with $\partial\T \cap \partial\E \neq \emptyset$,
given~$\FET$ the $(d-1)$-dimensional simplex
$\partial\T \cap \partial\E$,
\begin{equation} \label{regularity-mesh}
\gamma\hE \le d \vert \T \vert \vert \FET \vert^{-1}.
\end{equation}
Given
\begin{equation} \label{htildeF}
\htildeF
\quad \text{ either equal to } \hF
\text{ or equal to }\quad
\begin{cases}
    \min(\h_{\E_1} , \h_{\E_2}) & \text{if } \F\in\FcalhI,\; \F\subset\partial\E_1\cap\partial\E_2\\
    \h_\E & \text{if } \F\in\FcalhD,\; \F \in\FcalE,
\end{cases}         
\end{equation}
the following geometric bounds are a consequence
of~\eqref{regularity-mesh} and~\eqref{htildeF}
\begin{equation} \label{geometric-properties}
\begin{split}
\gamma \hE \vert\partial\E\vert
\le d  \vert\E\vert
\le d  \hE^d,
\qquad\qquad\qquad
\vert\F\vert
\le \hF^{d-1}
\le \htildeF^{d-1}
\le \hE^{d-1} .
\end{split}
\end{equation}
Families of shape-regular simplicial
and Cartesian meshes fall into the setting above.
Broken Sobolev spaces associated with $\Tauh$ are defined as 
\[
\W^{1,\p}(\Tauh):=
\left\{u\in \L^\p(\Omega)\,\mid\, {u}_{|\E}\in \W^{1,\p}(\E)
                        \qquad \forall \E\in\Tauh\right\}.
\]
We denote the piecewise gradient over~$\Tauh$ by~$\nablah$.
For every $v$ in~$\W^{1,\p}(\Tauh)$ and~$\F$ in~$\Fcalh$,
the jump operator on~$\F$ is given by
$$
\jump{v}_F:=
\begin{cases}
v_{|\E_{1,\F}} \nbf_{\E_{1,\F}} \cdot\nbfF
    + v_{|\E_{2,\F}} \nbf_{\E_{2,\F}} \cdot\nbfF 
    &\text{if } F \in \FcalhI,\; \F \subset \partial \E_{1,\F}\cap \partial \E_{2,\F}\\
v_{|\F} \nbf_{\E_{\F}} \cdot\nbfF
    &\text{if } F\in\FcalhB,\; \F \subset \partial\E_\F \cap \partial\Omega.
\end{cases}
$$
We omit the subscript $\F$ whenever it is clear
from the context.

\subsection{Broken seminorms and norms, and technical tools}
\label{subsection:semi-norms-technical}
We endow the space~$\W^{1,\p}(\Tauh)$ with the seminorms
\begin{subequations} \label{functionals-broken}
\begin{align}
\SemiNorm{v}_{\W^{1,\p;\q}(\Tauh)}
& := \Norm{\nablah v}_{\Lbf^\p(\Omega)}
  + \big( \sum_{\F\in\FcalhI} 
        \htildeF^{-\frac{\p}{\qprime}
        -\frac{d\p}{\pprime}
        +\frac{d\p}{\qprime}}
        \Norm{\jump{v}}^\p_{\L^{\q}(\F)}
        \big)^\frac1\p, \label{seminorm-broken}\\
\Norm{v}_{\W^{1,\p;\q}_{\GammaD}(\Tauh)}
&   := \Norm{\nablah v}_{\Lbf^\p(\Omega)}
    + \big( \sum_{\F\in\FcalhID} 
    \htildeF^{-\frac{\p}{\qprime}
    -\frac{d\p}{\pprime}
    +\frac{d\p}{\qprime}}
    \Norm{\jump{v}}^\p_{\L^{\q}(\F)}
    \big)^\frac1\p  . \label{norm-broken}
\end{align}
\end{subequations}

\begin{remark}[Comparison of broken seminorms] \label{remark:embedding-broken}
From definition~\eqref{norm-broken}
direct Lebesgue embeddings on facets,
and the second chain
of inequalities in~\eqref{geometric-properties},
we get
\begin{equation} \label{eq:trivial-bound}
\SemiNorm{v}_{\W^{1,\p;s}(\Tauh)}
\le \SemiNorm{v}_{\W^{1,\p;t}(\Tauh)}
\qquad\qquad\qquad
\forall t \in [1,\infty), \quad
\forall s \in [1,t].
\end{equation}
Moreover, proceeding as in \cite[Lemma 5.1]{DiPietro-Ern:2012}
and using the first inequality
in~\eqref{geometric-properties},
there also exists a positive constant~$C_{DG}$
only depending on~$\gamma$ and~$\Omega$ such that
\[
\SemiNorm{v}_{\W^{1,\p;\p}(\Tauh)}
\le C_{DG} \SemiNorm{v}_{\W^{1,\q;\q}(\Tauh)}
\qquad\qquad\qquad
\forall \q \in [1,\infty),\quad
\forall \p \in [1, \q].
\]
\eremk
\end{remark}


The seminorm in~\eqref{norm-broken}
is a norm, as a consequence of the next result,
which was proven in~\cite{Botti-Mascotto:2025-B}.

\begin{lemma}[Sobolev-Poincar\'e inequalities for piecewise $\W^{1,\p}$ functions] \label{lemma:broken-Sobolev-LP}
Let~$\{\Tauh\}$ be a family of meshes
as in Section~\ref{subsection:meshes-broken}
and~$\p$ be in $[1,\infty)$.
There exist positive constant~$\CSP$ and $\CSPsharp$
may possibly depend on~$\p$, $\GammaD$,
$\Omega$, $d$, and~$\gamma$ such that
\begin{equation} \label{eq:broken-Sobolev}
\Norm{v}_{\L^{\postar}(\Omega)}
\le \CSP \Norm{v}_{\W^{1,\p;\p}_{\GammaD}(\Tauh)}
\qquad\qquad\qquad
\forall v\in \W^{1,\p}(\Tauh),
\end{equation}
and, if~$\p$ further belongs to $[1,d)$,
\begin{equation} \label{eq:broken-Sobolev-sharp}
\Norm{v}_{\L^{\pstar}(\Omega)}
\le \CSPsharp \Norm{v}_{\W^{1,\p;\psharp}_{\GammaD}(\Tauh)}
\qquad\qquad\qquad
\forall v\in \W^{1,\p}(\Tauh).
\end{equation}
\end{lemma}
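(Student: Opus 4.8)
The plan is to derive both \eqref{eq:broken-Sobolev} and \eqref{eq:broken-Sobolev-sharp} from the case $\p=1$ through the classical power trick, reducing the case $\p=1$ to a Sobolev--Poincar\'e inequality for functions of bounded variation. Given $v$ in $\W^{1,1}(\Tauh)$, I would first observe that $v$ belongs to $\BV(\Omega)$ and that its distributional gradient splits into an absolutely continuous part $\nablah v$ and a singular part supported on the mesh skeleton, so that $|Dv|(\Omega)=\Norm{\nablah v}_{\L^1(\Omega)}+\sum_{\F\in\FcalhI}\int_\F|\jump v|$. Since $\Omega$ is as in \eqref{eq:Omega}, it enjoys a $\BV$ trace--Poincar\'e--Sobolev inequality $\Norm{w}_{\L^{\ostar}(\Omega)}\lesssim |Dw|(\Omega)+\Norm{w}_{\L^1(\GammaD)}$ at the continuous level; this is precisely the place where the regularity of $\Omega$ enters, as anticipated after \eqref{eq:Omega}. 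Because $\jump v$ reduces to the trace of $v$ on Dirichlet facets and $\ostar=\postar=\pstar$ when $\p=1$, this delivers both inequalities for $\p=1$, the facet weights of \eqref{norm-broken} being equal to $1$ in that case.

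For $\p>1$ I would apply the base case to $w:=|v|^\gamma$, taking $\gamma=\p$ for \eqref{eq:broken-Sobolev} and $\gamma=\frac{\p(d-1)}{d-\p}$ for \eqref{eq:broken-Sobolev-sharp} (where $\p<d$); in both cases $\gamma\,\ostar$ equals the target bulk index, $\postar$ and $\pstar$ respectively. From $|\nablah w|=\gamma|v|^{\gamma-1}|\nablah v|$ and the pointwise bound $|\jump{|v|^\gamma}|\lesssim \gamma\,M^{\gamma-1}|\jump v|$, with $M$ the larger one-sided trace of $|v|$ on the facet, H\"older's inequality (conjugate exponents $\pprime,\p$ in the bulk and $\qprime,\q$ on each facet, with $\q=\p$ resp. $\q=\psharp$) rewrites the right-hand side as a sum of products. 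The exponents are tuned so that $(\gamma-1)\qprime=\q$ on facets in both statements, which explains the jump index appearing in each; in the bulk, $(\gamma-1)\pprime=\pstar$ closes \eqref{eq:broken-Sobolev-sharp} directly, whereas for \eqref{eq:broken-Sobolev} one has $(\gamma-1)\pprime=\p$ and the leftover factor $\Norm{v}_{\L^\p(\Omega)}^{\p-1}$ must first be bounded by $\Norm{v}_{\L^{\postar}(\Omega)}^{\p-1}$ using the boundedness of $\Omega$ (harmless, since the constant may depend on $\Omega$).

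The delicate and decisive step is the treatment of the one-sided facet norms $\Norm{v}_{\L^\q(\F)}^{\gamma-1}$ left by H\"older. For each $\F=\FET$ I would control them by a scaled, elementary trace inequality on the adjacent simplex $\T\in\frakTE$, obtained by integrating $\div\bigl(|v|^\q\,\boldsymbol\Phi\bigr)$ for a fixed field $\boldsymbol\Phi$ with unit flux through $\F$; this brings in no finite-dimensional ingredient but reintroduces both a bulk Lebesgue term and the gradient $\Norm{\nablah v}_{\Lbf^\p(\T)}$, the correct powers of $\hE$ and $\htildeF$ being supplied by \eqref{regularity-mesh} and \eqref{geometric-properties}. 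A discrete H\"older inequality over the facets (exponents $\pprime,\p$) then factors the jump sum into the jump seminorm of \eqref{seminorm-broken} times the target bulk norm raised to $\gamma-1$, after which the gradient and lower-order remainders are absorbed and the common power $\Norm{v}^{\gamma-1}$ of the target norm is divided out.

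The main obstacle I anticipate is exactly this last balancing: the powers of $\hE$ and $\htildeF$ coming from the scaled trace inequality, the geometric bounds, and the weights in \eqref{seminorm-broken} must cancel identically, so that the final constant depends only on $\p$, $d$, $\gamma$, $\GammaD$, and $\Omega$, and in particular survives arbitrarily small facets and arbitrarily many facets per element. Ensuring that the remainders produced along the way can be reabsorbed without reintroducing any mesh-dependent constant is the part demanding the most care.
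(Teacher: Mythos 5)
The paper itself does not prove Lemma~\ref{lemma:broken-Sobolev-LP}: it imports the result from \cite[Theorems~1.5--1.6]{Botti-Mascotto:2025-B}, so there is no in-text proof to compare against line by line. That said, your strategy is coherent and is essentially the same machinery the paper deploys for its own trace theorems in Section~\ref{section:trace}: the $\p=1$ case via the $\BV$ embedding (your total-variation identity is the content of Lemma~\ref{lem:bnd_BV}, where only the inequality $|Dv|(\Omega)\le\SemiNorm{v}_{\W^{1,1}(\Tauh)}$ is needed and is obtained by elementwise integration by parts); the power trick $w=|v|^\gamma$ combined with the algebraic jump bound of Lemma~\ref{lemma:technical-ineq}; facet H\"older with tuned exponents; and the control of the one-sided facet norms by the local Sobolev--trace inequality~\eqref{intermediate-trace}, which is exactly the ``integrate $\div(|v|^\q\boldsymbol{\Phi})$ over the sub-simplices of $\frakTE$'' device you describe, with~\eqref{regularity-mesh} supplying the constant~$\gamma$. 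The exponent bookkeeping you flag as the decisive point does close: for $\gamma=\q=\p$ one has $(\p-1)\pprime=\p$ and the weight in~\eqref{norm-broken} reduces to $\htildeF^{1-\p}$, while for $\gamma=\q=\psharp$ one has $(\psharp-1)\pprime=\pstar$ and $(\psharp-1)\psharpprime=\psharp$, the facet weight degenerates to~$1$, and in~\eqref{intermediate-trace} the factor $\hE^{-1}\vert\E\vert^{1-\psharpprime/\pprime}=\hE^{-1}\vert\E\vert^{1/d}\le 1$, so no negative powers of~$\hE$ survive and the only residual mesh quantity is $\max_\E\hE\le\hOmega$, which is harmless. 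Two points deserve explicit care in a complete write-up: you must first verify that $|v|^\gamma$ belongs to $\W^{1,1}(\Tauh)$ (this is Lemma~\ref{lem:basic_sob1}; for $\gamma=\psharp$ it uses $\p<d$ and the identity $(\psharp-1)\pprime=\pstar$), and the final division by $\Norm{v}_{\L^{\postar}(\Omega)}^{\gamma-1}$ (resp. $\Norm{v}_{\L^{\pstar}(\Omega)}^{\gamma-1}$) is best replaced by a Young-inequality absorption after checking that these norms are finite through the elementwise Sobolev embedding. With those provisos your argument is sound.
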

For the explicit dependence of~$\CSP$ and~$\CSPsharp$
on the parameters highlighted in the statement
of Lemma~\ref{lemma:broken-Sobolev-LP},
we refer to \cite[Theorems 1.5-1.6]{Botti-Mascotto:2025-B}.

We report the following Sobolev-trace inequalities
from \cite[Theorem~1.1 and eq. (26)]{Botti-Mascotto:2025-B}.
\begin{lemma}[Local Sobolev--trace inequality]
\label{lemma:trace-general}
Let~$\{\Tauh\}$ be a family of meshes
as in Section~\ref{subsection:meshes-broken},
$\p$ be in $[1,\infty)$,
and $\q$ be in $[\p,\infty)$.
Then, we have
\begin{equation} \label{intermediate-trace}
\Norm{v}_{\L^\q(\partial\E)}^\q
\le \frac{d}{\gamma \hE} 
    \vert \E\vert^{1-\frac{\qprime}{\pprime}}
    \Norm{v}^\q_{\L^{\pprime(\q-1)} (\E)}
    + \frac{\q}{\gamma}
        \Norm{\nabla v}_{\Lbf^\p(\E)}
        \Norm{v}_{\L^{\pprime(\q-1)}(\E)}^{\q-1}.
\end{equation}
\end{lemma}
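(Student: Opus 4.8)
The plan is to reduce the bound over $\partial\E$ to a sum of elementary trace identities over the boundary-touching simplices of the partition $\frakTE$, each obtained from the divergence theorem applied to a well-chosen affine vector field, and then to recast the resulting volume integrals into the Lebesgue norms on the right-hand side by two applications of H\"older's inequality.

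First I would fix a simplex $\T$ in $\frakTE$ with $\FET=\partial\T\cap\partial\E$ a genuine $(d-1)$-face, and let $x_\T$ denote the vertex of $\T$ opposite to $\FET$. The affine field $\mathbf{b}(x):=x-x_\T$ satisfies $\div\mathbf{b}=d$, obeys $\vert\mathbf{b}\vert\le\hE$ on $\T$, and --- crucially --- its normal component vanishes on every face of $\T$ through $x_\T$ while on $\FET$ it equals the height $d\vert\T\vert\vert\FET\vert^{-1}$ of $\T$ over that face (there the outward normal of $\T$ coincides with $\nbfE$). Applying the Gauss--Green formula on $\T$ to $w:=\vert v\vert^\q$ (which lies in $\W^{1,1}(\T)$ whenever the right-hand side is finite, the inequality being otherwise vacuous) times $\mathbf{b}$, only the $\FET$-contribution survives on $\partial\T$, giving
\[
\frac{d\vert\T\vert}{\vert\FET\vert}\int_{\FET}\vert v\vert^\q
= d\int_\T \vert v\vert^\q + \int_\T (x-x_\T)\cdot\nabla\!\left(\vert v\vert^\q\right).
\]
Using $\nabla(\vert v\vert^\q)=\q\,\vert v\vert^{\q-1}\,\mathrm{sign}(v)\,\nabla v$, the bound $\vert x-x_\T\vert\le\hE$, and then dividing by the height $d\vert\T\vert\vert\FET\vert^{-1}$, which by \eqref{regularity-mesh} is at least $\gamma\hE$, I obtain the local estimate
\[
\int_{\FET}\vert v\vert^\q
\le \frac{d}{\gamma\hE}\int_\T \vert v\vert^\q
+ \frac{\q}{\gamma}\int_\T \vert v\vert^{\q-1}\,\vert\nabla v\vert.
\]

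Next I would sum this over all simplices of $\frakTE$ meeting $\partial\E$. Since these faces tile $\partial\E$ and the simplices are pairwise disjoint subsets of $\E$, the left-hand side adds up to $\Norm{v}_{\L^\q(\partial\E)}^\q$, while the two volume integrals are controlled by the corresponding integrals over all of $\E$. It then remains to convert $\int_\E\vert v\vert^\q$ and $\int_\E\vert v\vert^{\q-1}\vert\nabla v\vert$ into the stated norms. For the first, H\"older with exponents $\pprime(\q-1)/\q$ and its conjugate yields $\int_\E\vert v\vert^\q\le\vert\E\vert^{1-\qprime/\pprime}\Norm{v}^\q_{\L^{\pprime(\q-1)}(\E)}$, the exponent bookkeeping resting on the identity $\q/(\pprime(\q-1))=\qprime/\pprime$; the admissibility condition $\q\le\pprime(\q-1)$, i.e.\ $\qprime\le\pprime$, is exactly the hypothesis $\q\ge\p$. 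For the second, H\"older with exponents $\p$ and $\pprime$ gives $\int_\E\vert v\vert^{\q-1}\vert\nabla v\vert\le\Norm{\nabla v}_{\Lbf^\p(\E)}\Norm{v}^{\q-1}_{\L^{\pprime(\q-1)}(\E)}$. Assembling the two pieces produces exactly \eqref{intermediate-trace}.

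I expect the only genuinely delicate point to be the first step: identifying the field $\mathbf{b}=x-x_\T$ and verifying that its normal trace vanishes on the interior faces of $\T$ and equals the simplex height on $\FET$, so that the divergence theorem isolates the single face $\FET$ and the regularity bound \eqref{regularity-mesh} can be plugged in cleanly; everything downstream is geometric bookkeeping and two routine H\"older estimates whose exponents are pinned down by the relations $\q/(\pprime(\q-1))=\qprime/\pprime$ and $(\q-1)\pprime$. A secondary technical caveat is the validity of the chain rule and Gauss--Green formula for $w=\vert v\vert^\q$, which hold in $\W^{1,1}(\T)$ precisely when $\Norm{v}_{\L^{\pprime(\q-1)}(\E)}$ is finite, in agreement with the observation that the asserted bound is otherwise trivially satisfied.
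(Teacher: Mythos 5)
Your proof is correct. Note that the paper itself does not prove Lemma~\ref{lemma:trace-general}: it imports the inequality verbatim from \cite{Botti-Mascotto:2025-B}, so there is no in-text proof to compare against. Your argument --- Gauss--Green on each boundary simplex $\T$ with the field $x-x_\T$, whose normal trace vanishes on the faces through $x_\T$ and equals the height $d\vert\T\vert\vert\FET\vert^{-1}\ge\gamma\hE$ on $\FET$ by \eqref{regularity-mesh}, followed by summation over the boundary simplices and the two H\"older estimates whose admissibility is exactly $\q\ge\p$ --- is the standard route to such local trace inequalities on polytopic elements and reproduces \eqref{intermediate-trace} exactly, constants included.
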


We conclude this section with an auxiliary result.
\begin{lemma}[Auxiliary result] \label{lem:basic_sob1}
Given a mesh~$\Tauh$ in a sequence
as in Section~\ref{subsection:meshes-broken},
let $p$ be in~$[1,\infty)$ and $v$ in $\W^{1,\p}(\Tauh)$.
Then, $|v|^\p$ belongs to~$\W^{1,1}(\Tauh)$;
$|v|^{\q}$ also belongs to~$\W^{1,1}(\Tauh)$
for all $\q$ in $[\p,\psharp]$
if~$\p$ is in~$[1,d)$.
\end{lemma}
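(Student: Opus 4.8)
The plan is to argue element by element. Since $\W^{1,1}(\Tauh)$ is defined element by element, it suffices to fix an element $\E$ in $\Tauh$ and prove that $|v|^\q$ belongs to $\W^{1,1}(\E)$. This reduces to three points: (i) $|v|^\q\in\L^1(\E)$; (ii) the candidate gradient $\q|v|^{\q-2}v\,\nabla v$ (understood to vanish on $\{v=0\}$), whose modulus is $\q|v|^{\q-1}|\nabla v|$, lies in $\L^1(\E)$; and (iii) this candidate is indeed the weak gradient of $|v|^\q$. Point (i) is the requirement $v\in\L^\q(\E)$: for $\q=\p$ it is immediate from $v\in\W^{1,\p}(\E)\subset\L^\p(\E)$, while for $\p$ in $[1,d)$ and $\q$ in $[\p,\psharp]$ it follows from the Sobolev embedding~\eqref{Sobolev-embedding} and the bound $\psharp\le\pstar$ in~\eqref{relation-indices}, which give $v\in\L^{\pstar}(\E)\hookrightarrow\L^{\q}(\E)$.

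For point (ii) I would invoke H\"older's inequality to get
\[
\int_\E \q|v|^{\q-1}|\nabla v|
\le \q\,\Norm{|v|^{\q-1}}_{\L^{\pprime}(\E)}\,\Norm{\nabla v}_{\Lbf^{\p}(\E)}
= \q\,\Norm{v}_{\L^{(\q-1)\pprime}(\E)}^{\q-1}\,\Norm{\nabla v}_{\Lbf^{\p}(\E)} ,
\]
which is finite as soon as $v\in\L^{(\q-1)\pprime}(\E)$. When $\q=\p$ one has $(\q-1)\pprime=\p$, so nothing beyond $v\in\L^\p(\E)$ is needed and the first assertion of the lemma holds for every $\p$ in $[1,\infty)$. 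For the second assertion the decisive elementary computation is that, for $\p$ in $(1,d)$, $(\q-1)\pprime\le\pstar\iff\q\le\psharp$; thus $\psharp$ is exactly the largest exponent for which the embedding~\eqref{Sobolev-embedding} still furnishes $v\in\L^{(\q-1)\pprime}(\E)$, closing the estimate precisely on the claimed range. (For $\p=1$ one has $\psharp=1$ and the statement reduces to $|v|\in\W^{1,1}(\E)$.)

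The step I expect to be the genuine obstacle is point (iii): for $\q>1$ the map $t\mapsto|t|^\q$ is only locally Lipschitz, so the chain rule for Sobolev functions does not apply directly to the possibly unbounded function $v$, and the formula for the weak gradient must be justified by approximation. I would truncate, setting $v_M:=\max(-M,\min(v,M))$, which lies in $\W^{1,\p}(\E)$ with $\nabla v_M=\chi_{\{|v|<M\}}\nabla v$. Since $t\mapsto|t|^\q$ is Lipschitz on $[-M,M]$, the classical chain rule yields $|v_M|^\q\in\W^{1,1}(\E)$ with $\nabla|v_M|^\q=\q|v_M|^{\q-2}v_M\,\nabla v_M$. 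Letting $M\to\infty$, dominated convergence gives $|v_M|^\q\to|v|^\q$ in $\L^1(\E)$ (since $|v_M|^\q\le|v|^\q$) and $\nabla|v_M|^\q\to\q|v|^{\q-2}v\,\nabla v$ in $\L^1(\E)$ (the integrands being dominated by $\q|v|^{\q-1}|\nabla v|\in\L^1(\E)$ from point (ii)). Passing to the limit in the identity $\int_\E|v_M|^\q\,\partial_i\varphi=-\int_\E(\nabla|v_M|^\q)_i\,\varphi$, valid for all $\varphi$ in $C_c^\infty(\E)$ and all $i$, then identifies $\q|v|^{\q-2}v\,\nabla v$ as the weak gradient of $|v|^\q$, so that $|v|^\q\in\W^{1,1}(\E)$ and the proof concludes.
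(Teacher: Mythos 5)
Your proof is correct and follows essentially the same route as the paper's: H\"older's inequality with exponents $\p$ and $\pprime$ applied to $\q|v|^{\q-1}|\nabla v|$, combined with the Sobolev embedding~\eqref{Sobolev-embedding} and the key identity $(\psharp-1)\pprime=\pstar$ (which you generalize to the equivalence $(\q-1)\pprime\le\pstar\iff\q\le\psharp$, thereby covering all intermediate $\q$ directly where the paper only checks the endpoint $\q=\psharp$). The only substantive addition is your point (iii): the paper simply asserts the chain-rule formula $|\nabla(|v|^\q)|=\q|v|^{\q-1}|\nabla v|$, whereas you justify it by truncation and dominated convergence, which is a legitimate and careful filling-in of a step the paper takes for granted rather than a different argument.
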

\begin{proof}
Given $v$ in $\W^{1,\p}(\Tauh)$, $|v|^\p$ belongs to $\L^1(\Omega)$
since $(|v|^\p)_{|\E}$ is in $\L^1(\E)$ for all $\E$ in~$\Tauh$. 
Using H\"older's inequality ($\p$ and $\pprime$)
and
$|\nabla (|v|^\p)_{|\E}|= p |v_{|\E}|^{p-1} |\nabla v_{|\E}|$
for all $\E$ in~$\Tauh$,
we deduce that $|v|^\p$ is in $\W^{1,1}(\Tauh)$.
In fact,
$$
\int_\E |\nabla (|v|^\p)|
\le \p \Big(\int_\E |v|^{p}\Big)^{\frac{p-1}p} 
    \Big(\int_\E |\nabla v|^{p}\Big)^{\frac1p}
\le \p \Norm{v}^{p-1}_{\L^{\p}(\E)}
    \SemiNorm{v}_{\W^{1,\p}(\E)}
\le \p \Norm{v}^{p}_{\W^{1,\p}(\E)}
\qquad \forall \E\in\Tauh.
$$
In order to show that $|v|^{\q}$ belongs
to~$\W^{1,1}(\Tauh)$ if~$\p$ is in $[1,d)$
and $\q$ is in~$[\p,\psharp]$,
we first remark that $|v|^{\psharp}$ belongs to~$\L^1(\Omega)$
using that $\psharp<p^*$ and
the corresponding Sobolev embedding~\eqref{Sobolev-embedding}.
Applying H\"older's inequality
and observing that $(\psharp -1)p'=p^*$,
we infer
$$
\int_\E |\nabla (|v|^{\psharp})|
= \psharp \int_\E |v_{|\E}|^{{\psharp}-1} |\nabla v_{|\E}|
\le \psharp \Big(\int_\E |v|^{p^*}\Big)^{\frac1{\pprime}}
    \Big(\int_\E |\nabla v|^{p}\Big)^{\frac1\p}
\le \psharp \Norm{v}^{\psharp-1}_{\L^{\pstar}(\E)}
    \Norm{v}_{\W^{1,\p}(\E)}.
$$
The Sobolev embedding in~\eqref{Sobolev-embedding} gives
$\Norm{v}_{\L^{\pstar}(\E)} \le \CSob{\pstar}{\p}{\E}  \Norm{v}_{\W^{1,\p}(\E)}$,
whence the right-hand side above is finite
and thus the assertion follows.
\end{proof}

\subsection{Functions of bounded variation}
\label{subsection:BV}
The space $\BV(\Omega)$ consists of functions
whose distributional derivative is a Radon measure
with finite total variation, i.e.,
$$
\BV(\Omega):= \{v\in \L^1(\Omega) \mid |D v|(\Omega)<\infty\}
\qquad\text{where}\qquad
|D v|(\Omega)
:= \sup_{\boldsymbol{\xi}\in \boldsymbol{C}^1_0(\Omega) \, ,  \,
\Norm{\boldsymbol{\xi}}_{\L^\infty(\Omega)}  \le1}
\int_\Omega v \ \nabla\cdot\boldsymbol{\xi},
$$
which we endow with the norm
\[
\Norm{v}_{\BV,\Omega}
:=  \Norm{v}_{\L^1(\Omega)} + |D v|(\Omega) .
\]
In what follows, we review some properties of functions of bounded variation;
cf. \cite{Ambrosio-Fusco-Pallara:2000} for a detailed presentation of these spaces.
The following result provides
the existence of a bounded, linear trace operator
from $\BV(\Omega)$ into $\L^1(\partial\Omega)$;
details can be found
in \cite[Theorem 5.5]{Lahti-Shanmugalingam:2018},
\cite[Theorem 4.2]{Buffa-Ortner:2009},
and \cite[Chapter~3]{Ambrosio-Fusco-Pallara:2000}.

\begin{lemma}
[Trace inequality in $\BV(\Omega)$] \label{lem:trace_sob_BV}
There exists a positive constant $\CBV$ only depending on~$\Omega$
such that
\begin{equation}\label{eq:trace_sob_BV}
\Norm{v}_{\L^1(\partial\Omega)}
\le \CBV \Norm{v}_{\BV,\Omega}
\qquad\qquad\qquad \forall v \in \BV(\Omega).
\end{equation}
\end{lemma}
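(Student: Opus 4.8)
The plan is to reduce the global statement to a local, flat-boundary model, prove the elementary estimate for smooth functions, and then carry it over to $\BV(\Omega)$ by an approximation argument compatible with the geometry of $\BV$. Since $\Omega$ is Lipschitz, I would first cover $\partial\Omega$ by finitely many open patches $\{U_i\}$ with a subordinate partition of unity $\{\phi_i\}$ such that, in each $U_i$ and after a rigid motion, $\Omega\cap U_i$ is the epigraph $\{(x',x_d) : x_d > \gamma_i(x')\}$ of a Lipschitz function $\gamma_i$ with constant $L_i$. This splits $\Norm{v}_{\L^1(\partial\Omega)}$ into a finite sum of contributions, one per patch, and it suffices to bound each of them.

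For $v$ in $C^1(\overline\Omega)$ I would establish the local bound by the fundamental theorem of calculus in the transversal direction. Writing the boundary value as $v(x',\gamma_i(x')) = v(x',\gamma_i(x')+t) - \int_0^t \partial_d v(x',\gamma_i(x')+s)\,ds$, averaging over $t$ in $(0,\delta)$, and integrating over $x'$ while bounding the surface element by $\sqrt{1+L_i^2}$, gives
$$
\int_{\partial\Omega\cap U_i}|v|\,d\mathcal{H}^{d-1}
\le C_i\Big(\tfrac{1}{\delta}\int_{S_{i,\delta}}|v| + \int_{S_{i,\delta}}|\nabla v|\Big),
$$
where $S_{i,\delta}$ is the strip of width $\delta$ below the patch. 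Summing over $i$ for a fixed $\delta$ yields $\Norm{v}_{\L^1(\partial\Omega)}\le C(\Norm{v}_{\L^1(\Omega)}+\Norm{\nabla v}_{\L^1(\Omega)})$ for smooth $v$, and the right-hand side equals $C\Norm{v}_{\BV,\Omega}$ since $|Dv|(\Omega)=\Norm{\nabla v}_{\L^1(\Omega)}$ in this case.

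The main obstacle is the passage from smooth functions to general $v$ in $\BV(\Omega)$. Norm density fails, since $C^\infty(\overline\Omega)$ is not dense in $\BV(\Omega)$ for the $\BV$-norm, while the trace is discontinuous for weak-$*$ convergence; hence the naive limit does not work, because applying the smooth estimate to differences $v_n-v_m$ produces $\Norm{\nabla(v_n-v_m)}_{\L^1(\Omega)}$, which is not controlled by strict convergence (only the total variations, not the gradients, converge). The correct tool is the $\BV$ mollification theorem, providing $v_n$ in $C^\infty(\Omega)\cap \W^{1,1}(\Omega)$ with $v_n\to v$ in $\L^1(\Omega)$ and $|Dv_n|(\Omega)\to|Dv|(\Omega)$, upgraded to a trace-preserving approximation in the spirit of Anzellotti--Giaquinta, in which the $v_n$ can be chosen to share the same boundary trace as $v$. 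The justification of this upgrade rests precisely on the localized strip estimate above: the trace mass on $\partial\Omega\cap U_i$ is controlled by $|Dv|$ of an arbitrarily thin strip plus $\delta^{-1}$ times the $\L^1$ mass of that strip, so that letting $\delta\to 0$ shows the boundary behaviour depends only on $v$ near $\partial\Omega$ and is stable under strict convergence.

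Granting the trace-preserving approximation, the proof concludes by passing the smooth inequality to the limit:
$$
\Norm{v}_{\L^1(\partial\Omega)}
= \lim_{n}\Norm{v_n}_{\L^1(\partial\Omega)}
\le C\lim_{n}\big(\Norm{v_n}_{\L^1(\Omega)}+\Norm{\nabla v_n}_{\L^1(\Omega)}\big)
= C\big(\Norm{v}_{\L^1(\Omega)}+|Dv|(\Omega)\big)
= \CBV\Norm{v}_{\BV,\Omega},
$$
where the first equality uses that the $v_n$ share the trace of $v$, and the convergence of the right-hand side uses $\Norm{v_n}_{\L^1(\Omega)}\to\Norm{v}_{\L^1(\Omega)}$ together with $\Norm{\nabla v_n}_{\L^1(\Omega)}=|Dv_n|(\Omega)\to|Dv|(\Omega)$. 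The constant $\CBV$ depends only on $\Omega$ through the number of patches and the Lipschitz constants $L_i$, and linearity and boundedness of the resulting trace operator are then immediate.
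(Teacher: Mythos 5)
The paper does not prove this lemma at all: it is quoted as a known result, with pointers to Lahti--Shanmugalingam, Buffa--Ortner, and Ambrosio--Fusco--Pallara (Chapter~3), so there is no internal proof to compare against. Your sketch is, in substance, the classical argument from those references (localization of the Lipschitz boundary, fundamental theorem of calculus in the transversal direction to get the strip estimate for smooth functions, then transfer to $\BV$ by strict approximation), and the two nontrivial steps you isolate --- the strip estimate and the failure of norm density --- are exactly the right ones. The constant you obtain depends only on the covering and the Lipschitz constants of the boundary charts, consistent with the statement.

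One wrinkle is worth flagging. As written, your limit passage leans on a \emph{trace-preserving} strict approximation ``in the spirit of Anzellotti--Giaquinta,'' i.e.\ on choosing $v_n$ with the same boundary trace as $v$. But the existence of a trace for a general $\BV$ function is part of what the lemma asserts, so invoking that refinement up front is mildly circular. The standard way to close the loop, using only what you have already proved, is to apply your strip estimate to the differences $v_n-v_m$ of the (ordinary, interior-mollified) strict approximants: for fixed $\delta$ the term $\delta^{-1}\Norm{v_n-v_m}_{\L^1(S_{i,\delta})}$ vanishes as $n,m\to\infty$, while $|D(v_n-v_m)|(S_{i,\delta})\le |Dv_n|(S_{i,\delta})+|Dv_m|(S_{i,\delta})$ is made small by first shrinking $\delta$ (choosing $\delta$ so that $|Dv|(\partial S_{i,\delta})=0$ and using that strict convergence upgrades to convergence of $|Dv_n|$ on such sets). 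This shows the Sobolev traces of the $v_n$ form a Cauchy sequence in $\L^1(\partial\Omega)$, which simultaneously \emph{defines} the $\BV$ trace and yields the bound by passing your smooth-function inequality to the limit. With that substitution your argument is complete and self-contained; as it stands it is correct but defers the genuinely delicate step to a cited result whose proof is essentially the Cauchy argument just described.
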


Another important result states that
$\BV(\Omega)$ embeds continuously in broken Sobolev spaces
for any mesh as in Section~\ref{subsection:meshes-broken};
the proof is rather standard but we report it for the
sake of completeness.
\begin{lemma}[Bound of the $\BV$-norm
by broken Sobolev norms]\label{lem:bnd_BV}
Given $\CSP$ as in~\eqref{eq:broken-Sobolev}
and a polytopic mesh~$\Tauh$
as in Section~\ref{subsection:meshes-broken},
we have
$$
\Norm{v}_{\BV,\Omega}
\le (1+\vert\Omega\vert^\frac1 d \CSP)
    \Norm{v}_{\W^{1,1}_{\GammaD}(\Tauh)}
\qquad\qquad\qquad \forall v \in \W^{1,1}(\Tauh).
$$
\end{lemma}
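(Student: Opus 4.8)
The plan is to estimate the two ingredients of $\Norm{v}_{\BV,\Omega}=\Norm{v}_{\L^1(\Omega)}+|Dv|(\Omega)$ separately and then recombine them: the total variation will reproduce the broken gradient together with the internal jump terms, while the $\L^1$ term will be absorbed through the Sobolev--Poincar\'e inequality~\eqref{eq:broken-Sobolev}.

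For the total variation, I would fix $\boldsymbol{\xi}$ in $\boldsymbol{C}^1_0(\Omega)$ with $\Norm{\boldsymbol{\xi}}_{\L^\infty(\Omega)}\le 1$ and split $\int_\Omega v\,\nabla\cdot\boldsymbol{\xi}=\sum_{\E\in\Tauh}\int_\E v\,\nabla\cdot\boldsymbol{\xi}$. Since each polytope $\E$ is Lipschitz and $v_{|\E}$ lies in $\W^{1,1}(\E)$, the trace of $v_{|\E}$ on $\partial\E$ is well defined in $\L^1(\partial\E)$ and the Gauss--Green formula gives $\int_\E v\,\nabla\cdot\boldsymbol{\xi}=-\int_\E\nabla v\cdot\boldsymbol{\xi}+\int_{\partial\E}v\,\boldsymbol{\xi}\cdot\nbf_\E$. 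The volume terms sum to at most $\Norm{\nablah v}_{\Lbf^1(\Omega)}$ because $|\boldsymbol{\xi}|\le 1$, so it remains to regroup the boundary contributions facet by facet.

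This regrouping is the heart of the argument. On an internal facet $\F$ shared by $\E_{1,\F}$ and $\E_{2,\F}$, the two outward unit normals are collinear with $\nbfF$, so the two element contributions combine into $\int_\F\jump{v}\,(\boldsymbol{\xi}\cdot\nbfF)$, precisely the jump of Section~\ref{subsection:meshes-broken}; on every boundary facet the integrand vanishes because $\boldsymbol{\xi}$ is compactly supported in $\Omega$. Using $|\boldsymbol{\xi}\cdot\nbfF|\le 1$ and passing to the supremum over admissible $\boldsymbol{\xi}$ yields $|Dv|(\Omega)\le\Norm{\nablah v}_{\Lbf^1(\Omega)}+\sum_{\F\in\FcalhI}\Norm{\jump{v}}_{\L^1(\F)}$. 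Since $\FcalhI\subseteq\FcalhID$ and all summands are nonnegative, the right-hand side is bounded by $\Norm{v}_{\W^{1,1}_{\GammaD}(\Tauh)}$ in view of~\eqref{norm-broken}.

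For the $\L^1$ term I would apply H\"older's inequality to pass to the Sobolev-conjugate exponent $\ostar=\frac{d}{d-1}$, giving $\Norm{v}_{\L^1(\Omega)}\le|\Omega|^{1/d}\Norm{v}_{\L^{\ostar}(\Omega)}$ since $1-1/\ostar=1/d$, and then invoke~\eqref{eq:broken-Sobolev} with $\p=1$, whose exponent $\postar$ then reduces to $\ostar$, to get $\Norm{v}_{\L^{\ostar}(\Omega)}\le\CSP\Norm{v}_{\W^{1,1}_{\GammaD}(\Tauh)}$. Summing the two estimates produces the factor $1+|\Omega|^{1/d}\CSP$ and closes the proof. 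The only genuinely delicate point is the integration by parts of the second paragraph: one must guarantee that the $\W^{1,1}$ traces on the element boundaries exist and that the orientations of the facet normals are tracked correctly, so that the element boundary integrals collapse exactly into the jump seminorm of~\eqref{norm-broken}; the H\"older estimate and the final summation are then routine.
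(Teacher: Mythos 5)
Your proposal is correct and follows essentially the same route as the paper's proof: elementwise integration by parts and regrouping of the boundary contributions into internal jumps (with boundary facets vanishing by the compact support of $\boldsymbol{\xi}$) to bound $|Dv|(\Omega)$, then H\"older with exponent pair $(\ostar,d)$ and the broken Sobolev--Poincar\'e inequality~\eqref{eq:broken-Sobolev} with $\p=1$ to absorb the $\L^1$ term. Your observation that the weights in~\eqref{norm-broken} degenerate to $1$ when $\p=\q=1$ is exactly what makes the jump sum coincide with the broken $\W^{1,1}$ seminorm, as used implicitly in the paper.
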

\begin{proof}
Let $v$ be in $\W^{1,1}(\Tauh)$.
We first bound the total variation~$|D v|(\Omega)$
using an integration by parts
and H\"older's inequality ($1$ and $\infty$):
$$
\begin{aligned}
|D v|(\Omega)
&= \sup_{\boldsymbol\xi\in\boldsymbol{C}^1_0(\Omega)\, , \,
    \Norm{\boldsymbol\xi}_{\L^\infty(\Omega)}\le1} 
    \sum_{\E\in\Tauh}\int_\E v\ \div \boldsymbol{\xi} \\
& =\sup_{\boldsymbol\xi\in\boldsymbol{C}^1_0(\Omega)\, , \,
    \Norm{\boldsymbol\xi}_{\L^\infty(\Omega)}\le1}
    \Big( -\int_\Omega \nablah v\cdot\boldsymbol{\xi} 
    +\sum_{F\in\FcalhI}\int_F \jump{v}\,
    \boldsymbol{\xi}\cdot\boldsymbol{n}_F \Big) \\
& \le \sup_{\boldsymbol\xi\in\boldsymbol{C}^1_0(\Omega)\, , \,
    \Norm{\boldsymbol\xi}_{\L^\infty(\Omega)}\le1}
    \Big(\Norm{\nabla_h v}_{\Lbf^1(\Omega)}
    + \sum_{F\in\FcalhI} \Norm{\jump{v}}_{\L^1(F)} \Big) 
    \Norm{\boldsymbol\xi}_{\L^\infty(\Omega)}
\le \SemiNorm{v}_{\W^{1,1}(\Tauh)}.
\end{aligned}  
$$
Using H\"older's inequality ($\ostar$ and $(\ostar)'=d$)
and~\eqref{eq:broken-Sobolev}, we deduce
\[
\Norm{v}_{\L^{1}(\Omega)}
\le \vert\Omega\vert^\frac1 d \Norm{v}_{\L^{\ostar}(\Omega)}
\le \vert\Omega\vert^\frac1 d \CSP \Norm{v}_{\W^{1,1}_{\GammaD}(\Tauh)}.
\]
The assertion follows noting
\[
\Norm{v}_{\BV,\Omega} 
\le \SemiNorm{v}_{\W^{1,1}(\Tauh)}
    + \vert\Omega\vert^\frac1 d \CSP \Norm{v}_{\W^{1,1}_{\GammaD}(\Tauh)}.
\]
\end{proof}

\section{Trace inequalities in broken Sobolev spaces}\label{section:trace}
The aim of this section is to establish
several trace inequalities
in broken Sobolev spaces~$\W^{1,\p}(\Tauh)$.
Some technical, preliminary results are detailed in
Section~\ref{subsection:preliminary-results};
Section~\ref{subsection:trace-theorems} is instead
concerned with the proof of these trace inequalities
and a discussion on their applicability in the analysis
of certain nonconforming methods.

\subsection{Preliminary results} \label{subsection:preliminary-results}
We begin with an algebraic bound.
\begin{lemma}[Algebraic bound] \label{lemma:technical-ineq}
Given~$a \ge b\ge 0$ and $r\ge1$, we have
\[
a^r-b^r
\le r ( a-b ) 
     \frac{(a^{r-1}+b^{r-1})}{2}.
\]
\end{lemma}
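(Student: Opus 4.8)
The plan is to read the right-hand side as a one-panel trapezoidal quadrature and thereby reduce the claim to a convexity statement. Assuming $a>b$ (the case $a=b$ is trivial, since both sides vanish), I would begin from the exact integral representation
\[
a^r-b^r=\int_b^a r\,s^{r-1}\,ds,
\]
valid for every $r\ge 1$. The quantity $r(a-b)\frac{a^{r-1}+b^{r-1}}{2}$ is precisely $r$ times the trapezoidal approximation of $\int_b^a s^{r-1}\,ds$ on the single interval $[b,a]$. Hence the assertion is equivalent to
\[
\int_b^a s^{r-1}\,ds\le (a-b)\,\frac{a^{r-1}+b^{r-1}}{2},
\]
i.e.\ to the statement that the graph of $g(s):=s^{r-1}$ over $[b,a]$ lies below its chord.

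The second step is to identify this chord bound as the convexity of $g$: it is exactly the trapezoidal over-estimate, which holds whenever $g$ is convex on $[b,a]$. Since $g''(s)=(r-1)(r-2)\,s^{r-3}$ for $s>0$, the function $g$ is convex precisely when $(r-1)(r-2)\ge 0$. For $r\ge 2$ this is immediate and the chord bound, hence the claim, follows at once. A fully self-contained variant avoiding quadrature is to fix $b\ge 0$, set $\phi(a):=r(a-b)\frac{a^{r-1}+b^{r-1}}{2}-(a^r-b^r)$, and compute $\phi(b)=0$, $\phi'(b)=0$, and $\phi''(a)=\tfrac{r(r-1)(r-2)}{2}\,(a-b)\,a^{r-3}$; for $r\ge 2$ one gets $\phi''\ge 0$ on $[b,\infty)$, so $\phi$ is convex with a double zero at $a=b$, whence $\phi\ge 0$ and the inequality follows. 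This packaging has the advantage of making the dependence on the exponent fully explicit.

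The step I expect to be the main obstacle is the sub-quadratic range $1<r<2$. There $(r-1)(r-2)<0$, so $g(s)=s^{r-1}$ is \emph{concave} and $\phi''<0$, meaning the trapezoidal reduction reverses and the double-zero argument yields $\phi\le 0$ rather than $\phi\ge 0$. Indeed, testing $a=1$, $b=0$ forces $1\le r/2$, i.e.\ $r\ge 2$, so the averaged bound cannot hold uniformly on all of $r\ge 1$. Thus the clean convexity proof covers exactly $r\ge 2$ (together with the trivial endpoint $r=1$), and the interval $1<r<2$ is the delicate case I would isolate and examine separately before either restricting the exponent range or adjusting the right-hand side accordingly.
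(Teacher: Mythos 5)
Your argument is correct where it applies, and the obstacle you isolate at the end is not a defect of your method: the inequality as stated is genuinely false for real $r$ in $(1,2)$. Your endpoint test $a=1$, $b=0$ already shows it (the right-hand side is $r/2<1$), and the failure persists for $b>0$: with $r=3/2$, $a=4$, $b=1$ the left-hand side equals $7$ while the right-hand side equals $6.75$. Your computation $\phi''(a)=\tfrac{r(r-1)(r-2)}{2}(a-b)a^{r-3}$ makes this systematic: for $1<r<2$ one has $\phi''\le 0$ with a double zero at $a=b$, so the inequality reverses strictly whenever $a>b$. The paper's proof does not contradict you because it tacitly assumes $r$ is a positive integer: both the telescoping factorization $a^r-b^r=(a-b)(a^{r-1}+a^{r-2}b+\dots+b^{r-1})$ and the function $f(t)=(t^{r-1}+\dots+t+1)/(t^{r-1}+1)$ are meaningless for non-integer $r$, and even the ``trivial'' base case $b=0$ reads $a^r\le (r/2)a^r$, which itself forces $r\ge 2$. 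For integer $r$ the paper's argument is sound (the pairing $t^j+t^{r-1-j}\le t^{r-1}+1$ shows $f$ is maximized at $t=1$ with $f(1)=r/2$), but on the range where the statement is actually true your convexity/trapezoid proof is strictly more general, covering all real $r\ge 2$ rather than only integers.

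The restriction is not cosmetic: the lemma is invoked in the proof of Proposition~\ref{prop:sob-trace-general} with $r=\q$ a real Lebesgue index that can lie in $(1,2)$ (e.g.\ for $\p$ close to $1$), so either the exponent range must be restricted to $r\ge2$ or the right-hand side must be weakened. A harmless repair is to drop the averaging: by the mean value theorem,
\[
a^r-b^r=r\,\xi^{r-1}(a-b)\le r\,(a-b)\,a^{r-1}\le r\,(a-b)\,(a^{r-1}+b^{r-1})
\qquad\text{for all } a\ge b\ge 0,\ r\ge 1,
\]
since $\xi\in(b,a)$ and $r-1\ge0$. This holds for every real $r\ge1$ and only costs a factor $2$ downstream, yielding $\jump{|v|^{\q}}\le 2\q\,\avg{|v|^{\q-1}}\,|\jump{v}|$ in Proposition~\ref{prop:sob-trace-general} and a corresponding doubling of the constant $C_1$.
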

\begin{proof}
If $b=0$, then the assertion is trivial.
Otherwise, we have
\[
a^r-b^r
= (a-b)(a^{r-1} + a^{r-2}b + \dots + ab^{r-2} + b^{r-1}).
\]
In order to conclude, it suffices
to observe that the function $f:\Rbb^+\to\Rbb^+$ defined such that
\[
f(t) = \frac{t^{r-1}+t^{r-2}+\dots+t+1}
        {t^{r-1}+1}
\]
attains its maximum in~$1$ and point out $f(1)=r/2$.
\end{proof}

We prove next an ancillary result.
\begin{proposition}[Preliminary Sobolev-trace inequalities for piecewise $\W^{1,\p}$ functions]
\label{prop:sob-trace-general}
Let~$\p$ be in~$[1,\infty)$,
$q$ in~$[\p,\psharp]\cap[p,\infty)$,
and $\s$ in~$[p,q]$.
Then, for all~$v$ in~$\W^{1,\p}_{\GammaD}(\Tauh)$,
we have
$$
\Norm{v}^{\q}_{\L^{\q}(\partial\Omega)} \le 
  C_1(\q,\Omega)\left[
    \Norm{v}^{\q-1}_{\L^{\pprime(\q-1)}(\Omega)}
    + \Big(\sum_{\E\in\Tauh}
    \hE^{d - \frac{(d-1)\pprime}{\sprime}}
    \Norm{v}^{\pprime(\q-1)}_{\L^{\sprime(\q-1)}(\partial\E)}
    \Big)^{\frac{1}{\pprime}}\right]
    \Norm{v}_{\W^{1,\p;\s}_{\GammaD}(\Tauh)},
$$
where, given~$\CSP$ in~\eqref{eq:broken-Sobolev}
and~$\CBV$ in~\eqref{eq:trace_sob_BV},
the constant $C_1$ is defined as
\begin{equation} \label{C1}
C_1(\q,\Omega) := 
\q \,\CBV (1+\vert\Omega\vert^\frac1 d \CSP).
\end{equation}
\end{proposition}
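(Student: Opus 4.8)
The plan is to reduce the statement to the scalar trace inequality in $\BV(\Omega)$ applied to the nonnegative function $|v|^\q$, and then to unfold the resulting broken $\W^{1,1}$-norm into the two building blocks appearing on the right-hand side. Since $|v|^\q\ge0$, one has $\Norm{v}_{\L^\q(\partial\Omega)}^\q=\Norm{|v|^\q}_{\L^1(\partial\Omega)}$, and by Lemma~\ref{lem:basic_sob1} the hypotheses $\q\in[\p,\psharp]\cap[\p,\infty)$ guarantee $|v|^\q\in\W^{1,1}(\Tauh)$. Chaining the $\BV$-trace bound~\eqref{eq:trace_sob_BV} with the embedding of Lemma~\ref{lem:bnd_BV} then gives
\[
\Norm{v}_{\L^\q(\partial\Omega)}^\q
\le \CBV(1+|\Omega|^{\frac1d}\CSP)\,\Norm{|v|^\q}_{\W^{1,1}_{\GammaD}(\Tauh)},
\]
so the whole problem is recast as an estimate of $\Norm{|v|^\q}_{\W^{1,1}_{\GammaD}(\Tauh)}$, that is, of the volumetric term $\Norm{\nablah|v|^\q}_{\Lbf^1(\Omega)}$ plus the facet term $\sum_{\F\in\FcalhID}\Norm{\jump{|v|^\q}}_{\L^1(\F)}$. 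The prefactor is already the one in~\eqref{C1}, so it remains to produce the bracket times $\Norm{v}_{\W^{1,\p;\s}_{\GammaD}(\Tauh)}$, with residual constant $\q$.

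For the volumetric term I would use $|\nabla|v|^\q|=\q\,|v|^{\q-1}|\nabla v|$ elementwise, apply H\"older with exponents $\pprime$ and $\p$ on each $\E$ (admissible since $(\q-1)\pprime\le\pstar$ by $\q\le\psharp$), and then discrete H\"older over the mesh. Using $\sum_\E\Norm{v}_{\L^{\pprime(\q-1)}(\E)}^{\pprime(\q-1)}=\Norm{v}_{\L^{\pprime(\q-1)}(\Omega)}^{\pprime(\q-1)}$ and bounding $\Norm{\nablah v}_{\Lbf^\p(\Omega)}$ by $\Norm{v}_{\W^{1,\p;\s}_{\GammaD}(\Tauh)}$, this yields $\Norm{\nablah|v|^\q}_{\Lbf^1(\Omega)}\le\q\,\Norm{v}_{\L^{\pprime(\q-1)}(\Omega)}^{\q-1}\Norm{v}_{\W^{1,\p;\s}_{\GammaD}(\Tauh)}$, which already matches the first summand in the bracket.

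The facet term is where the work is. On an internal facet shared by $\E_1,\E_2$ the jump $\jump{|v|^\q}$ equals $\pm(|v_{|\E_1}|^\q-|v_{|\E_2}|^\q)$ and $\jump{v}$ equals $\pm(v_{|\E_1}-v_{|\E_2})$; applying the algebraic Lemma~\ref{lemma:technical-ineq} with $r=\q$, together with $\big||v_{|\E_1}|-|v_{|\E_2}|\big|\le|\jump{v}|$, gives the pointwise bound $|\jump{|v|^\q}|\le\tfrac{\q}{2}\,|\jump{v}|\,(|v_{|\E_1}|^{\q-1}+|v_{|\E_2}|^{\q-1})$ (Dirichlet facets contribute nothing, since the trace of $v$ vanishes there). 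Integrating over $\F$ and using H\"older with exponents $\s$ and $\sprime$ isolates $\Norm{\jump{v}}_{\L^\s(\F)}$; a discrete H\"older with exponents $\p$ and $\pprime$, distributing the weight $\htildeF$ so as to reconstruct the jump seminorm of $\Norm{v}_{\W^{1,\p;\s}_{\GammaD}(\Tauh)}$, leaves me to control $\big(\sum_\F\htildeF^{\,d-\frac{(d-1)\pprime}{\sprime}}(\Norm{v_{|\E_1}}_{\L^{\sprime(\q-1)}(\F)}^{\q-1}+\Norm{v_{|\E_2}}_{\L^{\sprime(\q-1)}(\F)}^{\q-1})^{\pprime}\big)^{1/\pprime}$.

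The hard part is turning this facet sum into the element-boundary sum in the statement. First I would use $(a+b)^{\pprime}\le 2^{\pprime-1}(a^{\pprime}+b^{\pprime})$ to split the two sides of each facet, then reindex the facet–element incidences as a sum over elements $\E$ and their facets. Two monotonicity facts make the weights and norms collapse correctly: the exponent $d-\frac{(d-1)\pprime}{\sprime}$ is nonnegative precisely because $\s\le\q\le\psharp$ forces $\sprime\ge\tfrac{(d-1)\pprime}{d}$ (the case $\p\ge d$, where $\psharp=\infty$, being immediate), so $\htildeF\le\hE$ from~\eqref{geometric-properties} upgrades each facet weight to $\hE^{\,d-\frac{(d-1)\pprime}{\sprime}}$; and since $\s\ge\p$ gives $\pprime/\sprime\ge1$, the superadditivity $\sum_\F x_\F^{\pprime/\sprime}\le(\sum_\F x_\F)^{\pprime/\sprime}$ assembles the per-facet integrals $\int_\F|v|^{\sprime(\q-1)}$ into $\int_{\partial\E}|v|^{\sprime(\q-1)}$, i.e. into $\Norm{v}_{\L^{\sprime(\q-1)}(\partial\E)}^{\pprime(\q-1)}$. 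Collecting constants, the factor $2^{\pprime-1}$ combines with the $\tfrac{\q}{2}$ and the outer $1/\pprime$ power into $\q\,2^{-1/\pprime}\le\q$, so both summands carry the common factor $\q$ and the bracket emerges as stated. I would treat $\p=1$ (where $\pprime=\infty$) separately, with suprema and $\ell^\infty$ replacing the $\pprime$-powers throughout; the same three ingredients apply verbatim.
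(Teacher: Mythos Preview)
Your proposal is correct and follows essentially the same route as the paper: apply the $\BV$-trace bound and Lemma~\ref{lem:bnd_BV} to $|v|^\q$, split into volumetric and facet pieces, and handle the latter via H\"older $(\s,\sprime)$ on each facet followed by discrete H\"older $(\p,\pprime)$, with the same superadditivity step (left implicit in the paper) to pass from per-facet to $\partial\E$ norms. The only caveat is that the paper retains the Dirichlet-facet contributions rather than dismissing them (there $|\jump{|v|^\q}|=|v|^{\q-1}|\jump{v}|$ trivially, so the same bound applies); your dismissal presumes that membership in $\W^{1,\p}_{\GammaD}(\Tauh)$ forces $v_{|\GammaD}=0$, which the paper does not make explicit, but the fix is immediate.
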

\begin{proof} 
Since $\q$ belongs to $[\p,\psharp]\cap[p,\infty)$,
Lemmas~\ref{lem:basic_sob1} and~\ref{lem:bnd_BV}
imply that $|v|^{\q}$
is in~$\W^{1,1}(\Tauh) \subset \BV(\Omega)$
for any~$v$ in $\W^{1,\p}(\Tauh)$.
Thus, using Proposition~\ref{lem:trace_sob_BV}
and Lemma~\ref{lem:bnd_BV}, we infer
\begin{equation}\label{eq:sob_start}
\begin{aligned}
\Norm{v}_{\L^{\q}(\partial\Omega)}^{\q}
& = \Norm{|v|^{\q}}_{\L^1(\partial\Omega)}
    \overset{\eqref{eq:trace_sob_BV}}{\le}
    \CBV\Norm{|v|^{\q}}_{\BV,\Omega}
    \overset{\eqref{eq:broken-Sobolev}}{\le}
    \CBV(1+\vert\Omega\vert^\frac1 d \CSP)
        \Norm{\, |v|^{\q}}_{\W^{1,1}_{\GammaD}(\Tauh)} \\
&   \overset{\eqref{seminorm-broken}}{=}
    \CBV (1+\vert\Omega\vert^\frac1 d \CSP)
    \Big(\sum_{\E\in\Tauh}\int_\E |\nabla(|v|^{\q})| 
    + \sum_{F\in\FcalhID}\int_F |\jump{|v|^{\q}}|\Big).
\end{aligned}
\end{equation}
For all $\E$ in $\Tauh$,
$|\nabla (|v_{|\E}|^{\q})|
= \q\ |v_{|\E}|^{\q-1} \ |\nabla v_{|\E}|$.
Lemma~\ref{lemma:technical-ineq} and the triangle inequality entail
\[
\jump{|v|^{\q}}\le \q \avg{|v|^{\q-1}}\jump{|v|} 
\le \q \avg{|v|^{\q-1}}|\jump{v}| 
\qquad\qquad\forall F\in\FcalhI. 
\]
Given~$C_1$ as in~\eqref{C1},
this allows us to write
\begin{equation} \label{eq:inequality-parameter-r}
 \Norm{v}^{\q}_{\L^{\q}(\partial\Omega)}
    \le  C_1 \Big(\sum_{\E\in\Tauh}\int_\E |v|^{\q-1}|\nabla v|
    + \sum_{F\in\FcalhID} 
    \int_F \avg{|v|^{\q-1}}|\jump{v}|
    \Big) = C_1 \big(\mathcal{I}_1 + \mathcal{I}_2).
\end{equation}
H\"older's inequality ($\p$ and~$\pprime$)
and H\"older's inequality for sequences ($\p$ and~$\pprime$)
give
\[
\begin{aligned}
\mathcal{I}_1 &\le \sum_{\E\in\Tauh}
        \Norm{v}^{\q-1}_{\L^{\pprime(\q-1)}(\E)}
        \Norm{\nabla v}_{\Lbf^\p(\E)}
    &\le  \Norm{v}^{\q-1}_{\L^{\pprime(\q-1)}(\Omega)}
    \Norm{\nablah v}_{\Lbf^\p(\Omega)}.
\end{aligned}        
\]
As for the term~$\mathcal{I}_2$,
we let $s$ be in~$[\p,\q]$ and define 
\begin{equation} \label{choice-alpha}
\alpha 
:= \frac{1}{\sprime} + \frac{d}{\pprime} - \frac{d}{\sprime}
>0.
\end{equation}
We use H\"older's inequality ($\s$ and~$\sprime$),
H\"older's inequality for sequences ($\p$ and~$\pprime$),
and the fact that $\htildeF$ in~\eqref{htildeF} is smaller than~$\hE$ for all~$\F$ in~$\FcalE$,
and get
\[
\begin{aligned}
\mathcal{I}_2
&   \le \sum_{\E\in\Tauh}
    \Big(\frac12 \sum_{F\in\FcalEI}
    \Norm{v_{|\E}}^{\q-1}_{\L^{\sprime(\q-1)}(\F)}
    \Norm{\jump{v}}_{\L^{\s}(\F)}
    +\sum_{F\in\FcalED}
    \Norm{v_{|\E}}^{\q-1}_{\L^{\sprime(\q-1)}(\F)}
    \Norm{\jump{v}}_{\L^{\s}(\F)}  \Big)\\
& \le \Big(\sum_{\E\in\Tauh}
    \sum_{F\in\Fcalh^\E}
    \htildeF^{\alpha\pprime} \Norm{v_{|\E}}^{\pprime(\q-1)}_{\L^{\sprime(\q-1)}(\F)}\Big)^{\frac{1}{\pprime}}
    \Big(\sum_{F\in\FcalhI}
    (2\htildeF)^{-\alpha\p}\Norm{\jump{v}}^\p_{\L^{\s}(\F)}
    + \sum_{F\in\FcalhD} 
    \htildeF^{-\alpha\p}
    \Norm{\jump{v}}^\p_{\L^{\s}(\F)}\Big)^{\frac1p} \\ 
& \le \Big(\sum_{\E\in\Tauh}
    \hE^{\alpha\pprime}
    \Norm{v}^{\pprime(\q-1)}_{\L^{\sprime(\q-1)}(\partial\E)}
    \Big)^{\frac{1}{\pprime}}
    \Big(\sum_{F\in\FcalhID} 
    \htildeF^{-\alpha\p}
    \Norm{\jump{v}}^\p_{\L^{\s}(\F)}\Big)^{\frac1p}\\ 
& \le \Big(\sum_{\E\in\Tauh}
    \hE^{\alpha\pprime}
    \Norm{v}^{\pprime(\q-1)}_{\L^{\sprime(\q-1)}(\partial\E)}
    \Big)^{\frac{1}{\pprime}}
    \Norm{v}_{\W^{1,\p;\s}_{\GammaD}(\Tauh)}.
\end{aligned}
\]
The conclusion follows by plugging
the bounds on~$\mathcal{I}_1$ and~$\mathcal{I}_2$
in~\eqref{eq:inequality-parameter-r}. 
\end{proof}

\subsection{Trace theorems} \label{subsection:trace-theorems}
We are now in a position to prove the main results of this work.
We consider three cases
and postpone their applicability in the analysis
of nonconforming methods
to Remark~\ref{remark:where-to-use} below:
\begin{itemize}
\item \textbf{case 1}:
$\p$ in $[1,\infty)$ and $\q$ in $[1,\pnatural]$;
\item \textbf{case 2}: 
   ~$\p$ in $[1,d)$ and $\q$ in $[\p, \psharp]$;
\item \textbf{case 3}:
   ~$\p$ in $[d,\infty)$ and $\q$ in $[\p,\infty)$.
\end{itemize}

\begin{theorem}[Sobolev-trace inequalities for piecewise $\W^{1,\p}$ functions: \textbf{case 1}] \label{theorem:sob-trace-pnatural}
Given~$\p$ in $[1,\infty)$ and $\q$ in $[1,\pnatural]$,
we have
\begin{equation}\label{eq:sob-theorem-1}
\Norm{v}_{\L^{\q}(\partial\Omega)}
\le \CSnatural(\p,\q,\Omega) \Norm{v}_{\W^{1,\p;\p}_{\GammaD}(\Tauh)}
\qquad\qquad\qquad
\forall v \in \W^{1,\p}_{\GammaD}(\Tauh), 
\end{equation}
where, given~$\CSP$ as in~\eqref{eq:broken-Sobolev} and
$C_1$ as in~\eqref{C1},
$$
\CSnatural(\p,\q,\Omega)
 := |\partial\Omega|^{\frac{\pnatural -\q}{\pnatural\q}} 
        C_1^{\frac{1}{\pnatural}}(\pnatural,\Omega)
        \CSP^{\frac{1}{\pnaturalprime}}\Big[
        \vert\Omega\vert^{\frac{(\postar-\pnatural)}{\postar\pnaturalprime}}
        + \Big( \frac{d}{\gamma} \CSP
        \vert\Omega\vert^{\frac{\postar}{\postar-\pddag}}
        + \frac{\pddag}{\gamma}
        \max_{\E\in\Tauh} (\hE^\p)
        \Big)^{\frac{1}{\pprime}}
        \Big]^{\frac{1}{\pnatural}}.
$$
\end{theorem}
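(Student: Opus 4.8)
The plan is to establish the endpoint case $\q=\pnatural$ first and then recover all $\q$ in $[1,\pnatural]$ by a single H\"older inequality on $\partial\Omega$. Since $\vert\partial\Omega\vert$ is finite and $\q\le\pnatural$, one has $\Norm{v}_{\L^\q(\partial\Omega)}\le\vert\partial\Omega\vert^{\frac1\q-\frac1\pnatural}\Norm{v}_{\L^{\pnatural}(\partial\Omega)}$, and $\frac1\q-\frac1\pnatural=\frac{\pnatural-\q}{\pnatural\q}$ is exactly the exponent of $\vert\partial\Omega\vert$ in $\CSnatural$. This reduction is also what makes the range $\q<\p$ admissible, a regime in which Proposition~\ref{prop:sob-trace-general} cannot be invoked directly.

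For the endpoint I would apply Proposition~\ref{prop:sob-trace-general} with $\q=\pnatural$ and $\s=\p$; this is legitimate because $\pnatural$ lies in $[\p,\psharp]$ by~\eqref{relation-indices} and $\p$ lies in $[\p,\pnatural]$, and it produces exactly the seminorm $\Norm{v}_{\W^{1,\p;\p}_{\GammaD}(\Tauh)}$ of the right-hand side of~\eqref{eq:sob-theorem-1}. The simplifications come from the algebraic relations among the indices in~\eqref{indices}: with $\s=\p$ one has $\sprime=\pprime$, so the facet-weight exponent $d-\frac{(d-1)\pprime}{\sprime}$ collapses to $1$, and a short computation gives $\pprime(\pnatural-1)=\pddag$. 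Proposition~\ref{prop:sob-trace-general} then reads
\[
\Norm{v}^{\pnatural}_{\L^{\pnatural}(\partial\Omega)}\le C_1(\pnatural,\Omega)\Big[\Norm{v}^{\pnatural-1}_{\L^{\pddag}(\Omega)}+\Big(\sum_{\E\in\Tauh}\hE\,\Norm{v}^{\pddag}_{\L^{\pddag}(\partial\E)}\Big)^{\frac{1}{\pprime}}\Big]\Norm{v}_{\W^{1,\p;\p}_{\GammaD}(\Tauh)}.
\]

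It remains to bound the two bracketed terms by $\Norm{v}_{\W^{1,\p;\p}_{\GammaD}(\Tauh)}$. For the volume term I would use $\pddag\le\postar$ from~\eqref{relation-indices} together with H\"older on $\Omega$ and the broken Sobolev--Poincar\'e inequality~\eqref{eq:broken-Sobolev}, namely $\Norm{v}_{\L^{\pddag}(\Omega)}\le\vert\Omega\vert^{\frac1\pddag-\frac1\postar}\CSP\Norm{v}_{\W^{1,\p;\p}_{\GammaD}(\Tauh)}$, which, raised to the power $\pnatural-1$ and combined with the outer normalisation, controls the first summand inside the bracket of $\CSnatural$. For the skeleton term I would apply the local Sobolev--trace inequality~\eqref{intermediate-trace} elementwise with $\q=\pddag$; the crucial identities $\pprime(\pddag-1)=\postar$ and $1-\frac{\pddagprime}{\pprime}=\frac1{\p d}$ turn both contributions on the right of~\eqref{intermediate-trace} into powers of the local quantities $\Norm{v}_{\L^{\postar}(\E)}$ and $\Norm{\nabla v}_{\Lbf^\p(\E)}$. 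Multiplying by $\hE$, summing over $\E$, and invoking H\"older's inequality for sequences (with exponents $\frac{\postar}{\pddag}$ for the first contribution and $\p,\pprime$ for the second, using $\pddag-1=\frac{\postar}{\pprime}$) recombines these into the global $\Norm{v}_{\L^{\postar}(\Omega)}$ and $\Norm{\nablah v}_{\Lbf^\p(\Omega)}$; one last application of~\eqref{eq:broken-Sobolev} produces the second summand inside that bracket.

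The final step is bookkeeping: collecting the two bounds, factoring out the common power of $\CSP$, taking the $\pnatural$-th root of the resulting inequality for $\Norm{v}^{\pnatural}_{\L^{\pnatural}(\partial\Omega)}$ (which converts $C_1$ into $C_1^{1/\pnatural}$ and $\CSP^{\pnatural-1}$ into $\CSP^{1/\pnaturalprime}$, since $\frac{\pnatural}{\pnaturalprime}=\pnatural-1$), and multiplying by the boundary H\"older factor of the first step. I expect the main obstacle to be precisely this constant tracking in the skeleton term: the summation exponents must be chosen so that no factor of the number of mesh elements survives and only $\max_{\E}\hE$ enters, and one must verify that the index identities $\pprime(\pnatural-1)=\pddag$, $\pprime(\pddag-1)=\postar$, and $\postar-\pddag=\frac1{d-1}$ make the geometric powers of $\hE$ and $\vert\Omega\vert$ assemble exactly as in $\CSnatural$. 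Notably, general polytopic meshes enter only through~\eqref{intermediate-trace} and the geometric bounds~\eqref{geometric-properties}, so no finite-dimensional or inverse-estimate argument is required.
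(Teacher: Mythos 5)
Your proposal is correct and follows essentially the same route as the paper's proof: Proposition~\ref{prop:sob-trace-general} with $\q=\pnatural$, $\s=\p$, the local trace inequality~\eqref{intermediate-trace} with $\q=\pddag$ on the skeleton term, the index identities $\pprime(\pnatural-1)=\pddag$ and $\pprime(\pddag-1)=\postar$, the broken Sobolev--Poincar\'e inequality~\eqref{eq:broken-Sobolev}, and the final $\pnatural$-th root plus H\"older on $\partial\Omega$. The only difference is cosmetic (you perform the boundary H\"older reduction at the start rather than at the end), and your observation that this reduction is what admits the range $\q<\p$ is exactly right.
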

\begin{proof}
We select $\q=\pnatural$ and $\s=\p$ in Proposition~\ref{prop:sob-trace-general} and observe that $\pprime(\pnatural-1)=\pddag$ to obtain
\begin{equation} \label{eq:start-case1}
\Norm{v}^{\pnatural}_{\L^{\pnatural}(\partial\Omega)} \le 
  C_1(\pnatural,\Omega)\left[
    \Norm{v}^{\pnatural-1}_{\L^{\pddag}(\Omega)}
    + \Big(\sum_{\E\in\Tauh}
    \hE
    \Norm{v}^{\pddag}_{\L^{\pddag}(\partial\E)}
    \Big)^{\frac{1}{\pprime}}\right]
    \Norm{v}_{\W^{1,\p;\p}_{\GammaD}(\Tauh)}
\end{equation}
Applying~\eqref{intermediate-trace} with $\q=\pddag$,
and observing
that $\pprime(\pddag-1)=\postar$ and
$1-\pddagprime/\pprime=1/(d\p)$, we infer
$$
\hE \Norm{v}_{\L^{\pddag}(\partial\E)}^{\pddag}
\le \frac{d}{\gamma}
    \vert \E\vert^{\frac{1}{d\p}}
    \Norm{v}^{\pddag}_{\L^{\postar} (\E)}
    + \frac{\pddag}{\gamma}\hE
        \Norm{\nabla v}_{\Lbf^\p(\E)}
        \Norm{v}_{\L^{\postar}(\E)}^{\pddag-1}.
$$
Using H\"older's inequality for sequences ($\postar/ (\postar-\pddag)$ and $\postar/ \pddag$), we remark that
\[
\frac{d}{\gamma}
\sum_{\E\in\Tauh} \vert\E\vert^{\frac1{d\p}} 
    \Norm{v}_{\L^{\postar}(\E)}
    ^{\pddag}
\le \frac{d}{\gamma} \Norm{v}_{\L^{\postar}(\Omega)}^{\pddag}
    (\sum_{\E\in\Tauh} \vert\E\vert)
    ^{\frac{\postar}{\postar-\pddag}}
=   \frac{d}{\gamma} 
    \vert\Omega\vert^{\frac{\postar}{\postar-\pddag}}
    \Norm{v}_{\L^{\postar}(\Omega)}^{\pddag}.
\]
Thus, using the previous bounds and again H\"older's inequality
for sequences ($\p$ and $\pprime$), we have
\[
\begin{split}
 \Big(\sum_{\E\in\Tauh}
    & \hE \Norm{v}^{\pddag}_{\L^{\pddag}(\partial\E)}
    \Big)^{\frac{1}{\pprime}}
\le \Big( \frac{d}{\gamma}\sum_{\E\in\Tauh} 
    \vert \E\vert^{\frac{1}{d\p}}
    \Norm{v}^{\pddag}_{\L^{\postar} (\E)}
    + \frac{\pddag}{\gamma}  
    \sum_{\E\in\Tauh}
\hE  \Norm{\nabla v}_{\Lbf^\p(\E)}
\Norm{v}_{\L^{\postar}(\E)}^{\pddag-1}
\Big)^{\frac{1}{\pprime}} \\
& \le 
\Big[ \frac{d}{\gamma}
    \vert\Omega\vert^{\frac{\postar}{\postar-\pddag}}
    \Norm{v}_{\L^{\postar}(\Omega)}^{\pddag}
    + \frac{\pddag}{\gamma} \Big(   \sum_{\E\in\Tauh}
\hE^\p  \Norm{\nabla v}_{\Lbf^\p(\E)}^\p
\Big)^{\frac{1}{\p}}
\Big(   \sum_{\E\in\Tauh}
\Norm{v}_{\L^{\postar}(\E)}^{\postar}
\Big)^{\frac{1}{\pprime}} \Big]^{\frac{1}{\pprime}}\\
& \le 
\Big[ \frac{d}{\gamma}
    \vert\Omega\vert^{\frac{\postar}{\postar-\pddag}}
      \Norm{v}_{\L^{\postar}(\Omega)}^{\pddag}
    + \frac{\pddag}{\gamma}
\max_{\E\in\Tauh} (\hE^\p)
    \Norm{\nabla v}_{\Lbf^\p(\Omega)}
\Norm{v}_{\L^{\postar}(\Omega)}^{\pddag-1}
\Big]^{\frac{1}{\pprime}}.
\end{split}
\]
Finally, we apply~\eqref{eq:broken-Sobolev} to deduce
\[
\Big(\sum_{\E\in\Tauh}
    \hE \Norm{v}^{\pddag}_{\L^{\pddag}(\partial\E)}
    \Big)^{\frac{1}{\pprime}}
\le 
\Big( \frac{d}{\gamma} \CSP^{\pddag}
    \vert\Omega\vert^{\frac{\postar}{\postar-\pddag}}
    + \frac{\pddag}{\gamma}
    \max_{\E\in\Tauh} (\hE^\p)
    \CSP^{\pddag-1} \Big)^{\frac{1}{\pprime}}
    \Norm{v}_{\W^{1,\p;\p}(\Tauh)}^{\frac{\pddag}{\pprime}}
\]
and
\[
\Norm{v}^{\pnatural-1}_{\L^{\pnatural}(\Omega)}
\le \vert\Omega\vert^{\frac{(\postar-\pnatural)}{\postar\pnaturalprime}}
    \Norm{v}_{\L^{\postar}(\Omega)}^{\pnatural-1}
\le \vert\Omega\vert^{\frac{(\postar-\pnatural)}{\postar\pnaturalprime}}
    \CSP^{\pnatural-1}
    \Norm{v}_{\W^{1,\p;\p}(\Omega)}^{\pnatural-1}.
\]
We insert these inequalities in~\eqref{eq:start-case1},
recall again that $\pddag/\pprime=\pnatural-1$,
and obtain
\[
\begin{split}
\Norm{v}^{\pnatural}_{\L^{\pnatural}(\partial\Omega)}
\le     C_1 \Big[
        \vert\Omega\vert^{\frac{(\postar-\pnatural)}{\postar\pnaturalprime}}
        \CSP^{\pnatural-1}
        + \Big( \frac{d}{\gamma} \CSP^{\pddag}
    \vert\Omega\vert^{\frac{\postar}{\postar-\pddag}}
    + \frac{\pddag}{\gamma}
    \max_{\E\in\Tauh} (\hE^\p)
    \CSP^{\pddag-1} \Big)^{\frac{1}{\pprime}}
    \Big]
    \Norm{v}_{\W^{1,\p;\p}(\Omega)}^{\pnatural}.
\end{split}
\]
The Sobolev trace inequality \eqref{eq:sob-theorem-1}
follows by taking the $\pnatural$-th root on both sides and
from
H\"older's inequality ($\pnatural/q$ and $(\pnatural/q)'$):
\[
\Norm{v}_{\L^{\q}(\partial\Omega)}\le |\partial\Omega|^{\frac{\pnatural -\q}{\pnatural\q}}
\Norm{v}_{\L^{\pnatural}(\partial\Omega)}.
\]
\end{proof}

\begin{theorem}[Sobolev-trace inequalities for piecewise $\W^{1,\p}$ functions: \textbf{case 2}] \label{theorem:sob-trace-psharp}
Given~$\p$ in $[1,d)$, we have
\begin{align}
\Norm{v}_{\L^{\q}(\partial\Omega)}
& \le \CSddag(\p,\q,\Omega) \Norm{v}_{\W^{1,\p;\pddag}_{\GammaD}(\Tauh)}
\qquad\quad
\forall v \in \W^{1,\p}_{\GammaD}(\Tauh),
\qquad \text{for } \q \in [\p, \pddag],
\label{eq:sob-theorem-2a}\\
\Norm{v}_{\L^{\q}(\partial\Omega)} 
& \le \CSsharp(\p,\q,\Omega) \Norm{v}_{\W^{1,\p;\psharp}_{\GammaD}(\Tauh)}
\qquad\quad
\forall v \in \W^{1,\p}_{\GammaD}(\Tauh),
\qquad \text{for } \q\in[\pddag, \psharp],
     \label{eq:sob-theorem-2b}
\end{align}
where, given~$\CSP$, $\CSPsharp$, and~$C_1$
as in~\eqref{eq:broken-Sobolev},
\eqref{eq:broken-Sobolev-sharp}, and~\eqref{C1},
and $C_4$ as in~\eqref{C4} below,
\begin{align*}
\CSddag(\p,\q,\Omega)
&   := |\partial\Omega|^{\frac{\pddag -\q}{\pddag\q}} 
    \frac{C_1^{\frac{1}{\pddag}}(\pddag,\Omega)}{\gamma^{\frac{d(\p-1)}{\pddag(d\p-1)}}}\Big(
    \left(\gamma^{\frac{d(\p-1)}{d\p-1}}+C_4(\p,\pddag)\right) (\CSP)^{\pddag-1} 
    + 
    \max_{\E\in\Tauh} \Big(\hE^{\frac{d(\p-1)^2}{\p(d-1)}}\Big) \Big)^{\frac{1}{\pddag}},\\
\CSsharp(\p,\q,\Omega)
&   := |\partial\Omega|^{\frac{\psharp -\q}{\psharp\q}} 
    \frac{C_1^{\frac{1}{\psharp}}(\psharp,\Omega)}{\gamma^{\frac{d(\p-1)}{\psharp \p(d-1)}}}\Big(
    \left(\gamma^{\frac{d(\p-1)}{\p(d-1)}}+C_4(\p,\psharp)\right) (\CSPsharp)^{\psharp-1} 
    + 1\Big)^{\frac{1}{\psharp}}.
\end{align*}
\end{theorem}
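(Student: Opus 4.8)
The plan is to reduce both inequalities to their endpoints $\q=\pddag$ and $\q=\psharp$, since the intermediate exponents then follow from one H\"older inequality on $\partial\Omega$: for $\q\le\psharp$ we have $\Norm{v}_{\L^\q(\partial\Omega)}\le|\partial\Omega|^{\frac{\psharp-\q}{\psharp\q}}\Norm{v}_{\L^\psharp(\partial\Omega)}$ (and likewise with $\pddag$), which accounts for the prefactors $|\partial\Omega|^{\frac{\psharp-\q}{\psharp\q}}$, $|\partial\Omega|^{\frac{\pddag-\q}{\pddag\q}}$ in the two constants. At the endpoint $\q=\psharp$ I would invoke Proposition~\ref{prop:sob-trace-general} with $\s=\q=\psharp$; the relevant identities are $\pprime(\psharp-1)=\pstar$, $\sprime(\psharp-1)=\psharp$ and, crucially, $d-\tfrac{(d-1)\pprime}{\psharpprime}=0$, so the element scaling disappears from the boundary sum and the proposition reads
\[
\Norm{v}^{\psharp}_{\L^{\psharp}(\partial\Omega)}\le C_1(\psharp,\Omega)\Big[\Norm{v}^{\psharp-1}_{\L^{\pstar}(\Omega)}+\Big(\sum_{\E\in\Tauh}\Norm{v}^{\pstar}_{\L^{\psharp}(\partial\E)}\Big)^{\frac1{\pprime}}\Big]\Norm{v}_{\W^{1,\p;\psharp}_{\GammaD}(\Tauh)}.
\]
For $\q=\pddag$ I would instead take $\s=\q=\pddag$, using $\pprime(\pddag-1)=\postar$, $\sprime(\pddag-1)=\pddag$ and $d-\tfrac{(d-1)\pprime}{\pddagprime}=\tfrac{d(\p-1)}{\p d-1}>0$; this surviving positive power of $\hE$ is exactly what will later leave the factor $\max_{\E}\hE^{\,d(\p-1)^2/(\p(d-1))}$ in $\CSddag$.

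The crux is to bound the boundary sum $\sum_\E\Norm{v}^{\pstar}_{\L^{\psharp}(\partial\E)}$ (and its $\pddag$-analogue). The main obstacle is a \emph{power mismatch}: Lemma~\ref{lemma:trace-general} only controls $\Norm{v}^{\q}_{\L^{\q}(\partial\E)}$, i.e.\ the norm raised to the \emph{same} exponent as the integrability, whereas here the norm appears to the power $\pprime(\psharp-1)=\pstar\ne\psharp$. This is precisely why \textbf{case 1} (where the choice $\s=\p$ forces the two exponents to coincide) was elementary. To bridge the gap I would apply~\eqref{intermediate-trace} with $\q=\psharp$, tame the $\hE^{-1}$ prefactor through the geometric bound $|\E|^{1/d}\le\hE$ of~\eqref{geometric-properties} (note $1-\tfrac{\psharpprime}{\pprime}=\tfrac1d$), and then raise to the power $t:=\pstar/\psharp=d/(d-1)$ and sum. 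The key device is to perform the summation via Minkowski's inequality in $\ell^{t}$ together with H\"older for sequences at the conjugate pair $(\psharp,\psharpprime)$ and the embedding $\ell^{\pstar}\hookrightarrow\ell^{\p}$; the identities $\psharp t=\pstar$ and $(\psharp-1)t\,\psharpprime=\pstar$ make every element-wise $\pstar$-power reassemble into a \emph{global} norm, producing the ``global'' trace bound
\[
\Big(\sum_{\E\in\Tauh}\Norm{v}^{\pstar}_{\L^{\psharp}(\partial\E)}\Big)^{1/t}\le\frac{d}{\gamma}\Norm{v}^{\psharp}_{\L^{\pstar}(\Omega)}+\frac{\psharp}{\gamma}\Norm{\nablah v}_{\Lbf^\p(\Omega)}\Norm{v}^{\psharp-1}_{\L^{\pstar}(\Omega)}.
\]

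With the global bound at hand I would factor out $\Norm{v}^{\psharp-1}_{\L^\pstar(\Omega)}$, raise to the power $1/\pprime$ (recall $t/\pprime=1/\psharpprime$), and use the subadditivity of $s\mapsto s^{1/\psharpprime}$ to separate a pure-volume term from the mixed term $\Norm{v}^{(\psharp-1)/\psharpprime}_{\L^\pstar(\Omega)}\Norm{\nablah v}^{1/\psharpprime}_{\Lbf^\p(\Omega)}$. A single Young inequality on the latter, at the conjugate pair $(\psharpprime,\psharp)$, splits it into a piece proportional to $\Norm{v}^{\psharp-1}_{\L^\pstar(\Omega)}$ and a \emph{pure broken-gradient} piece $\Norm{\nablah v}^{\psharp-1}_{\Lbf^\p(\Omega)}$. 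The first, together with the volume term, is absorbed by the \emph{sharp} Sobolev--Poincar\'e inequality~\eqref{eq:broken-Sobolev-sharp}, $\Norm{v}_{\L^\pstar(\Omega)}\le\CSPsharp\Norm{v}_{\W^{1,\p;\psharp}_{\GammaD}(\Tauh)}$, yielding the $(\CSPsharp)^{\psharp-1}$ factor; the gradient piece, controlled by $\Norm{v}_{\W^{1,\p;\psharp}_{\GammaD}(\Tauh)}$ alone, furnishes the additive ``$+1$'' (after bounding $\psharp^{-1/\psharp}\le1$). Collecting the constants from these steps into
\begin{equation}\label{C4}
C_4(\p,\q):=d^{\frac{1}{\qprime}}+\frac{\q^{\frac{1}{\qprime}}}{\qprime},
\end{equation}
and taking the $\psharp$-th root, gives~\eqref{eq:sob-theorem-2b}.

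For~\eqref{eq:sob-theorem-2a} the same scheme applies verbatim with $\psharp\rightsquigarrow\pddag$, $\pstar\rightsquigarrow\postar$, $t\rightsquigarrow\postar/\pddag$, and the \emph{non-sharp} Sobolev--Poincar\'e inequality~\eqref{eq:broken-Sobolev}, used after the elementary monotonicity $\Norm{v}_{\W^{1,\p;\p}_{\GammaD}(\Tauh)}\le\Norm{v}_{\W^{1,\p;\pddag}_{\GammaD}(\Tauh)}$ of Remark~\ref{remark:embedding-broken}. The only genuine difference is that the nonzero proposition weight $\hE^{\,d(\p-1)/(\p d-1)}$ now cancels against the local-trace scaling \emph{only} in the volume term (there $\beta/\mu-1/\pprime=0$), while in the broken-gradient term it survives and, after the Minkowski/H\"older/Young bookkeeping, collapses to the mesh-size factor $\max_{\E}\hE^{\,d(\p-1)^2/(\p(d-1))}$ appearing in $\CSddag$ in place of the ``$+1$''. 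I expect the delicate point throughout to be keeping the constant independent of the mesh size while simultaneously preserving the clean $(\CSPsharp)^{\psharp-1}$-structure; this is what dictates the particular order Minkowski, then H\"older, then Young.
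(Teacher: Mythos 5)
Your proposal is correct and follows essentially the same route as the paper: reduction to the endpoints $\q=\pddag,\psharp$ via H\"older on $\partial\Omega$, Proposition~\ref{prop:sob-trace-general} with $\s=\q$, the local trace inequality~\eqref{intermediate-trace} raised to the power $\pprime/\qprime$ combined with the geometric bound on $\vert\E\vert$, a Young inequality to decouple the gradient--volume product, and finally~\eqref{eq:broken-Sobolev} (after~\eqref{eq:trivial-bound}) or~\eqref{eq:broken-Sobolev-sharp}. The only difference is organizational --- you sum first (Minkowski in $\ell^{\pprime/\qprime}$ plus H\"older for sequences) and apply Young globally, whereas the paper applies Young element-wise so that the sums reassemble directly --- which changes nothing structurally but produces a slightly different explicit $C_4$ than the one in~\eqref{C4}.
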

\begin{proof}
Taking $s=q$ in Proposition \ref{prop:sob-trace-general} yields
\begin{equation}\label{eq:start-case2}
\Norm{v}^{\q}_{\L^{\q}(\partial\Omega)} \le 
  C_1(\q,\Omega)\left[
    \Norm{v}^{\q-1}_{\L^{\pprime(\q-1)}(\Omega)}
    + \Big(\sum_{\E\in\Tauh}
    \hE^{d - \frac{(d-1)\pprime}{\qprime}}
    \Norm{v}^{\pprime(\q-1)}_{\L^{\q}(\partial\E)}
    \Big)^{\frac{1}{\pprime}}\right]
    \Norm{v}_{\W^{1,\p;\q}_{\GammaD}(\Tauh)}.
\end{equation}
Given the unit ball~$\mathcal B_d$ in~$\Rbb^d$, we recall that
$
|\E|
\le \hE^d |\mathcal{B}_d|
=\hE^d \pi^{\frac d2}\Gamma \big( \frac d2 + 1 \big)^{-1}.
$
Using this bound followed by Young's inequality,
and introducing the constant $C_4 = C_4(\p,\q)$ defined as
\begin{equation}\label{C4}
C_4(\p,\q) := d^{\frac1{\qprime}} \left(\frac{\pi^{\frac d2}}
    {\Gamma \big( \frac d2 + 1 \big)}\right)^{\frac{1}{\qprime}-\frac{1}{\pprime}} + (q-1)^{\frac1{\qprime}},
\end{equation}
we deduce
\[
\begin{split}
&  \hE^{d - \frac{(d-1)\pprime}{\qprime}}
    \Norm{v}_{\L^{\q}(\partial\E)}^{\pprime(\q-1)}
    = \Big(\hE^{\frac{d\qprime}{\pprime} - d+1}
    \Norm{v}_{\L^{\q}(\partial\E)}^\q\Big)^{\frac{\pprime}{\qprime}}\\
&   \overset{\eqref{intermediate-trace}}{\le}
    \Big(\frac{d}{\gamma} 
    \hE^{d\big(\frac{\qprime}{\pprime}-1\big)}
    \vert \E\vert^{1-\frac{\qprime}{\pprime}}
    \Norm{v}^\q_{\L^{\pprime(\q-1)} (\E)}
    + \frac{\q}{\gamma} \hE^{\frac{d\qprime}{\pprime} - d+1}
        \Norm{\nabla v}_{\Lbf^\p(\E)}
        \Norm{v}_{\L^{\pprime(\q-1)}(\E)}^{\q-1}\Big)^{\frac{\pprime}{\qprime}}\\
&   \le \Big(  \frac{d}{\gamma} 
    \Big(\frac{\pi^{\frac d2}}
    {\Gamma \big( \frac d2 + 1 \big)}\Big)
    ^{1-\frac{\qprime}{\pprime}}
    \Norm{v}^\q_{\L^{\pprime(\q-1)} (\E)}
    + \frac{\q}{\gamma}
    \hE^{\frac{d\qprime}{\pprime} - d+1}
        \Norm{\nabla v}_{\Lbf^\p(\E)}
        \Norm{v}_{\L^{\pprime(\q-1)}(\E)}^{\q-1}\Big)^{\frac{\pprime}{\qprime}}\\
&   \le  \Big[ \frac1\gamma\Big(
    d \Big(\frac{\pi^{\frac d2}}
    {\Gamma \big( \frac d2 + 1 \big)}\Big)
    ^{1-\frac{\qprime}{\pprime}}
    +q-1 \Big)
    \Norm{v}^\q_{\L^{\pprime(\q-1)} (\E)}
    + 
    \frac{\hE^{\q-d\q+d\q\frac{\qprime}{\pprime}}}\gamma
        \Norm{\nabla v}_{\Lbf^\p(\E)}^\q 
        \Big]^{\frac{\pprime}{\qprime}}\\
&   \le \gamma^{-\frac{\pprime}{\qprime}} \left(
    C_4^{\pprime}
    \Norm{v}^{\pprime(\q-1)}_{\L^{\pprime(\q-1)} (\E)}
    + \hE^{(\q-1)(\pprime+d\qprime-d\pprime)} 
        \Norm{\nabla v}_{\Lbf^\p(\E)}^{\pprime(\q-1)}\right).
\end{split}
\]
A consequence of the previous bound is
\[
\begin{split}
\sum_{\E\in\Tauh}
    \hE^{d - \frac{(d-1)\pprime}{\qprime}}
    \Norm{v}^{\pprime(\q-1)}_{\L^{\q}(\partial\E)}
&   \le 
    \gamma^{-\frac{\pprime}{\qprime}}\sum_{\E\in\Tauh}\Big( 
    C_4^{\pprime}\Norm{v}^{\pprime(\q-1)}_{\L^{\pprime(\q-1)} (\E)}
    + \hE^{(\q-1)(\pprime+d\qprime-d\pprime)}
        \Norm{\nabla v}_{\Lbf^\p(\E)}^{\pprime(\q-1)}\Big) \\
& \le  
    \gamma^{-\frac{\pprime}{\qprime}} 
    \Big(C_4^{\pprime} 
    \Norm{v}_{\L^{\pprime(\q-1)}(\Omega)}^{\pprime(\q-1)}
    + \max_{\E\in\Tauh} (\hE^{(\q-1)(\pprime+d\qprime-d\pprime)})
    \Norm{\nablah v}_{\Lbf^\p(\Omega)}^{\pprime(\q-1)}
    \Big).
\end{split}
\]
Plugging the previous result into \eqref{eq:start-case2} implies
\[
\begin{split}
& C_1^{-1}
    \Norm{v}^{\q}_{\L^{\q}(\partial\Omega)}\\
& \le  \left[
    \Norm{v}^{\q-1}_{\L^{\pprime(\q-1)}(\Omega)}
    + \gamma^{-\frac{1}{\qprime}} 
    \Big(C_4 
    \Norm{v}_{\L^{\pprime(\q-1)}(\Omega)}^{(\q-1)}
    + \max_{\E\in\Tauh} \Big(\hE^{(\q-1)(1-d+\frac{d\qprime}{\pprime})}\Big)
    \Norm{\nablah v}_{\Lbf^\p(\Omega)}^{(\q-1)}
    \Big)\right]
    \Norm{v}_{\W^{1,\p;\q}_{\GammaD}(\Tauh)} \\
& \le \frac{\gamma^{\frac{1}{\qprime}} + C_4}{\gamma^{\frac{1}{\qprime}}}
    \Norm{v}^{\q-1}_{\L^{\pprime(\q-1)}(\Omega)}
    \Norm{v}_{\W^{1,\p;\q}_{\GammaD}(\Tauh)}
    + \frac1{\gamma^{\frac{1}{\qprime}}}
    \max_{\E\in\Tauh} \Big(\hE^{(\q-1)(1-d+\frac{d\qprime}{\pprime})}\Big)
    \Norm{v}_{\W^{1,\p;\q}_{\GammaD}(\Tauh)}^{\q}.
\end{split}
\]
Taking~$\q=\pddag$,
and using the broken Sobolev-Poincar\'e
inequality~\eqref{eq:broken-Sobolev} together with
bound~\eqref{eq:trivial-bound}
and the identity
\[
(\pddag-1)
\Big(1-d+d \frac{\pddagprime}{\pprime} \Big) 
= \frac{d(\p-1)^2}{\p(d-1)},
\]
we get
\[
\begin{split}
\Norm{v}^{\pddag}_{\L^{\pddag}(\partial\Omega)}
&   \le \frac{C_1}{\gamma^{\frac{d(\p-1)}{d\p-1}}}\Big(
    \left(\gamma^{\frac{d(\p-1)}{d\p-1}}+C_4\right) (\CSP)^{\pddag-1} 
    + 
    \max_{\E\in\Tauh} \Big(\hE^{\frac{d(\p-1)^2}{\p(d-1)}}\Big) \Big)
    \Norm{v}_{\W^{1,\p;\pddag}_{\GammaD}(\Tauh)}^{\pddag}.
\end{split}
\]
If instead we take $\q=\psharp$,
use the broken Sobolev-Poincar\'e
inequality~\eqref{eq:broken-Sobolev-sharp},
and observe
\[
(\psharp-1)
\Big(1-d+d \frac{\psharpprime}{\pprime} \Big) = 0,
\]
then we obtain
\[
\begin{split}
\Norm{v}^{\psharp}_{\L^{\psharp}(\partial\Omega)}
&   \le \frac{C_1}{\gamma^{\frac{d(\p-1)}{\p(d-1)}}}\Big(
    \left(\gamma^{\frac{d(\p-1)}{\p(d-1)}}+C_4\right)
    (\CSPsharp)^{\psharp-1} + 1 \Big)
    \Norm{v}_{\W^{1,\p;\psharp}_{\GammaD}(\Tauh)}^{\psharp}.
\end{split}
\]
Bounds~\eqref{eq:sob-theorem-2a}
and~\eqref{eq:sob-theorem-2b}
follow using H\"older's inequality ($r/\q$ and $(r/\q)'$)
with $r=\pddag$ and $r=\psharp$.
\end{proof}

\begin{theorem}[Sobolev-trace inequalities for piecewise $\W^{1,\p}$ functions: \textbf{case 3}] \label{theorem:sob-trace-3}
Given~$\p$ in $[d,\infty)$ and $\q$ in $[\p,\infty)$,
we have
\begin{equation} \label{eq:sob-theorem-3}
\Norm{v}_{\L^{\q}(\partial\Omega)} 
 \le \CSflat(\p,\q,\Omega) \Norm{v}_{\W^{1,\p;\q}_{\GammaD}(\Tauh)}
 \qquad\qquad\qquad
  \forall v \in \W^{1,\p}_{\GammaD}(\Tauh),
\end{equation}
where, given~$\CSsharp$
as in~\eqref{eq:sob-theorem-2b},
\begin{align*}
\CSflat(\p,\q,\Omega)
&   := \CSsharp(\qflat,\q,\Omega)
    \max_{\E\in\Tauh}\big( \vert\E\vert^{-1}
    \hE^{d} \card{(\FcalE)} \big)
    ^{\frac{\p-\qflat}{\p\qflat}}
    \vert\Omega\vert^{\frac{\p-\qflat}{\p\qflat}}.
\end{align*}
\end{theorem}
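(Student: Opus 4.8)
The plan is to deduce \textbf{case 3} from \textbf{case 2} by exploiting the duality of Lebesgue indices recorded in~\eqref{relation-indices}. Set $r:=\qflat$. Since $d\ge 2$ one has $r=\frac{\q d}{\q+d-1}\in[1,d)$, and moreover $(\qflat)^\sharp=\q$ by~\eqref{relation-indices}; hence $\q$ is the right endpoint of the interval $[(\qflat)^\ddag,(\qflat)^\sharp]$ and in particular belongs to it. Thus inequality~\eqref{eq:sob-theorem-2b}, applied with $\qflat$ playing the role of~$\p$, is available and its jump index equals $(\qflat)^\sharp=\q$, giving
\[
\Norm{v}_{\L^{\q}(\partial\Omega)}
\le \CSsharp(\qflat,\q,\Omega)\,
    \Norm{v}_{\W^{1,\qflat;\q}_{\GammaD}(\Tauh)}
\qquad\forall v\in\W^{1,\p}_{\GammaD}(\Tauh).
\]
It then remains to bound the broken norm $\Norm{v}_{\W^{1,\qflat;\q}_{\GammaD}(\Tauh)}$ by $\Norm{v}_{\W^{1,\p;\q}_{\GammaD}(\Tauh)}$, and here the hypothesis $\p\ge d$ enters through the elementary inequality $\qflat<d\le\p$.

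For the gradient term I would apply H\"older's inequality on~$\Omega$ with exponents $\p/\qflat$ and its conjugate, obtaining $\Norm{\nablah v}_{\Lbf^{\qflat}(\Omega)}\le|\Omega|^{\frac{\p-\qflat}{\p\qflat}}\Norm{\nablah v}_{\Lbf^{\p}(\Omega)}$. The jump term is the delicate part. Writing $x_\F:=\Norm{\jump v}_{\L^{\q}(\F)}$ and denoting by $\beta(a)=\frac{a(d-1)}{\qprime}-d(a-1)$ the exponent of $\htildeF$ in~\eqref{norm-broken} for a generic outer index~$a$ (using $\frac{a}{a'}=a-1$), I would split facet by facet $\htildeF^{\beta(\qflat)}x_\F^{\qflat}=\htildeF^{\beta(\qflat)-\frac{\qflat}{\p}\beta(\p)}\cdot\htildeF^{\frac{\qflat}{\p}\beta(\p)}x_\F^{\qflat}$ and apply H\"older's inequality for sequences with exponents $\frac{\p}{\p-\qflat}$ and $\frac{\p}{\qflat}$; the second factor reassembles the $\W^{1,\p;\q}$-jump seminorm raised to the power $\qflat$.

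The crux of the argument is the algebraic cancellation $\beta(\qflat)-\frac{\qflat}{\p}\beta(\p)=d\,\frac{\p-\qflat}{\p}$, in which the $\qprime$-dependent contributions drop out; multiplying by the conjugate exponent $\frac{\p}{\p-\qflat}$ turns the residual $\htildeF$-power into exactly $\htildeF^{d}$, independently of $\q$. The leftover factor is therefore $\big(\sum_{\F\in\FcalhID}\htildeF^{d}\big)^{\frac{\p-\qflat}{\p\qflat}}$, which I would control using $\htildeF\le\hE$ from~\eqref{geometric-properties} together with $\sum_{\E\in\Tauh}|\E|=|\Omega|$:
\[
\sum_{\F\in\FcalhID}\htildeF^{d}
\le \sum_{\E\in\Tauh}\card(\FcalE)\,\hE^{d}
\le \max_{\E\in\Tauh}\big(|\E|^{-1}\hE^{d}\card(\FcalE)\big)\,|\Omega|.
\]

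Finally I would assemble the two contributions. Since $|\E|\le\hE^{d}$ by~\eqref{geometric-properties} and $\card(\FcalE)\ge1$, the mesh factor $\max_{\E}(|\E|^{-1}\hE^{d}\card(\FcalE))$ is at least~$1$, so its $\frac{\p-\qflat}{\p\qflat}$-power dominates the bare $|\Omega|^{\frac{\p-\qflat}{\p\qflat}}$ coming from the gradient term; both terms of $\Norm{v}_{\W^{1,\qflat;\q}_{\GammaD}(\Tauh)}$ are thus bounded by $\max_{\E}(|\E|^{-1}\hE^{d}\card(\FcalE))^{\frac{\p-\qflat}{\p\qflat}}|\Omega|^{\frac{\p-\qflat}{\p\qflat}}\Norm{v}_{\W^{1,\p;\q}_{\GammaD}(\Tauh)}$. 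Substituting this into the case~2 bound reproduces exactly $\CSflat(\p,\q,\Omega)$ and closes the proof. I expect the only genuine obstacle to be the cancellation yielding the clean exponent~$d$; the remaining steps are a routine chain of H\"older inequalities and geometric mesh estimates.
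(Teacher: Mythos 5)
Your proposal is correct and follows essentially the same route as the paper's proof: apply \eqref{eq:sob-theorem-2b} with $\qflat$ in place of $\p$ (using $(\qflat)^\sharp=\q$ and $\qflat<d\le\p$), bound the gradient term by H\"older on $\Omega$, and handle the jump term via H\"older for sequences with exponents $\tfrac{\p}{\p-\qflat}$ and $\tfrac{\p}{\qflat}$ after inserting powers of $\htildeF$, controlling $\sum_\F\htildeF^{d}$ by the mesh factor $\max_\E(|\E|^{-1}\hE^d\card(\FcalE))\,|\Omega|$. Your exponent bookkeeping via $\beta(a)$ and the cancellation $\beta(\qflat)-\tfrac{\qflat}{\p}\beta(\p)=d\,\tfrac{\p-\qflat}{\p}$ is exactly the identity the paper verifies explicitly at the end of its proof.
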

\begin{proof}
Recall that $\qflat$ in $(1,d)$ is such that,
cf. \eqref{indices}--\eqref{relation-indices},
\begin{equation} \label{relation:r-d-p}
\qflat:=\frac{\q d}{\q +d -1}<d \le \p.
\end{equation}
Using~\eqref{eq:sob-theorem-2b}, we write
\[
\Norm{v}_{\L^{\q}(\partial\Omega)} 
\le \CSsharp(\qflat,\q,\Omega)
\Norm{v}_{\W^{1,\qflat;\q}_{\GammaD}(\Tauh)}.
\]
To conclude, we need to estimate from above
the two terms in the DG norm
appearing on the right-hand side;
cf.~\eqref{norm-broken}.
We begin with the gradient term:
\[
\Norm{\nablah v}_{\Lbf^{\qflat}(\Omega)}
\overset{\eqref{relation:r-d-p}}{\le} 
\vert \Omega \vert^{\frac{\p-\qflat}{\p\qflat}}
\Norm{\nablah v}_{\Lbf^\p(\Omega)}.
\]
We now focus on the jump terms: applying H\"older's inequality for sequences ($\p/(\p-\qflat)$ and $\p/\qflat$), we obtain
\[
\begin{split}
 \big( \sum_{\F\in\FcalhID}& \Norm{\jump{v}}_{\L^\q(\F)}^{\qflat} \big)^\frac1{\qflat}
= \big( \sum_{\F\in\FcalhID} \htildeF^{\frac{d(\p-\qflat)}{\p}}
    \htildeF^{-\frac{d(\p-\qflat)}{\p}} \Norm{\jump{v}}_{\L^{\qflat}(\F)}^{\qflat} \big)^\frac1\r \\
&  \le 
\big( \sum_{\F\in\FcalhID} \htildeF^{d} \big)
    ^{\frac{\p-\qflat}{\p\qflat}}
\big( \sum_{\F\in\FcalhID}
    \htildeF^{-\frac{d(\p-\qflat)}{\qflat}}
    \Norm{\jump{v}}_{\L^\q(\F)}^\p \big)^\frac1\p\\
& \le 
\big( \sum_{\E\in\Tauh}
    \sum_{\F\in\FcalE}\htildeF^{d} \big)
    ^{\frac{\p-\qflat}{\p\qflat}}
\big( \sum_{\F\in\FcalhID}
    \htildeF^{-\frac{d(\p-\qflat)}{\qflat}}
    \Norm{\jump{v}}_{\L^\q(\F)}^\p \big)^\frac1\p \\
& \le 
\max_{\E\in\Tauh}\big( \vert\E\vert^{-1}
    \hE^{d} \card{(\FcalE)} \big)
    ^{\frac{\p-\qflat}{\p\qflat}}
    \vert\Omega\vert^{\frac{\p-\qflat}{\p\qflat}}
\big( \sum_{\F\in\FcalhID}
    \htildeF^{-\frac{d(\p-\qflat)}{\qflat}}
    \Norm{\jump{v}}_{\L^\q(\F)}^\p \big)^\frac1\p.
\end{split}
\]
The assertion follows noting that
\[
-\frac{d(\p-\qflat)}{\qflat}
=
-\frac{\p}{\qprime}
    -\frac{d\p}{\pprime}
    +\frac{d\p}{\qprime}.
\]
\end{proof}

\begin{remark}[Applicability of
trace inequalities for piecewise $\W^{1,\p}$ functions
in the analysis of nonconforming Galerkin methods]
\label{remark:where-to-use}
Inequality~\eqref{eq:sob-theorem-1}
is suited for the analysis of interior penalty
discontinuous Galerkin methods
for nonlinear problems; such methods in fact
involve the $\Lbf^\p$ norm of the broken gradient
and the $\L^\p$ norm of the jumps.
For $\p<d$, inequality~\eqref{eq:sob-theorem-2b}
exhibits the maximal Lebesgue regularity
indices on the left-hand side
and on the jump terms,
while inequality~\eqref{eq:sob-theorem-2a}
is its counterpart with weaker norms;
a variant of both inequalities (namely that with a projection
onto constants over facets of the jumps)
can be employed in the analysis of Crouzeix-Raviart schemes,
as it allows for improved Lebesgue norms
on the left-hand side compared to~\eqref{eq:sob-theorem-1}.
Inequality~\eqref{eq:sob-theorem-3}
is instead concerned with the case~$\p\ge d$
and involves the same Lebesgue norms on $\partial\Omega$
and on the facet jumps.
\eremk
\end{remark}

\paragraph*{Acknowledgments.}
MB and LM have been partially funded by the
European Union (ERC, NEMESIS, project number 101115663);
views and opinions expressed are however those
of the authors only and do not necessarily reflect
those of the EU or the ERC Executive Agency.
LM has been partially funded by MUR (PRIN2022 research grant n. 202292JW3F). MB has been partially supported by the INdAM-GNCS project CUP E53C24001950001.
The authors are members of the Gruppo Nazionale Calcolo Scientifico-Istituto Nazionale di Alta Matematica (GNCS-INdAM).

{\footnotesize
\bibliography{bibliogr.bib}}
\bibliographystyle{plain}

\end{document}